  \theoremstyle{plain}
    \newtheorem{thm}{Theorem}[section]
    \newtheorem{prop}[thm]{Proposition}
    \newtheorem{cor}[thm]{Corollary}
    \newtheorem{lem}[thm]{Lemma}
  \theoremstyle{definition}
    \newtheorem{Def}[thm]{Definition}
    \newtheorem{notation}[thm]{Notation}
  \theoremstyle{remark}
    \newtheorem*{rem}{Remark}
    \newtheorem*{note}{Note}
    \newtheorem*{eg*}{Example}
\newcommand{\Z}{\mathbb{Z}}
\newcommand{\sym}[1]{\mathfrak{S}_{#1}}
\newcommand{\Lie}{\operatorname{Lie}}
\newcommand{\Liemax}{\operatorname{Lie^{\rm max}}}
\newcommand{\func}{\operatorname}
\newcommand{\Ind}{\func{Ind}}
\newcommand{\Res}{\func{Res}}
\begin{document}

\date{Nov 2009}

\title{The Lie module of the symmetric group}

\author{Karin Erdmann}
\address[K. Erdmann]{Mathematical Institute, 24--29 St Giles', Oxford, OX1 3LB, United Kingdom.}
\email{erdmann@maths.ox.ac.uk}

\author{Kai Meng Tan}
\address[K. M. Tan]{Department of Mathematics, National University of Singapore, 2, Science Drive 2, Singapore 117543.}
\email{tankm@nus.edu.sg}

\begin{abstract}
We provide an upper bound of the dimension of the maximal projective submodule of the Lie module of the symmetric group of $n$ letters in prime characteristic $p$, where $n = pk$ with $p \nmid k$.
\end{abstract}

\thanks{The second author thanks the Mathematical Institute, Oxford, for its hospitality during his visit in 2006, during which most of the work appearing here was done, and acknowledges support by MOE's Academic Research Fund R-146-000-089-112.}

\subjclass[2000]{20C30}

\maketitle

\section{Introduction}

The Lie module of the symmetric group $\sym{n}$ appears in many contexts; in particular it is
closely related to the free Lie algebra. One possible approach is to view it as
the right ideal of the group algebra $F\sym{n}$, generated by the
`Dynkin-Specht-Wever element'
$$\omega_n:= (1-c_2)(1-c_3)\dotsm (1-c_n)$$
where $c_k$ is the $k$-cycle $(1, 2, \dotsc, k)$.  We write
$\Lie(n) = \omega_nF\sym{n}$ for this Lie module.

It is well-known that
$\omega_n^2 = n\omega_n$, so if $n$ is non-zero in $F$ then $\Lie(n)$ is a direct
summand of the group algebra and hence is projective.
We are interested in this module when $F$ has prime characteristic $p$ and when
$p$ divides $n$. In this case $\omega_n$ is nilpotent, and
therefore $\Lie(n)$ always has non-projective summands, and
its module structure is not well-understood in general.

The module $\Lie(n)$ is not only part of the algebraic literature,
for example it has already appeared in the work of Witt \cite{Witt}
and Weber \cite{Weber},  it also occurs in
the context of algebraic topology, as a homology group in spaces
related to braid groups \cite{C}, configuration spaces and
in the study of
Goodwillie calculus \cite{AM}; and
in \cite{AK}, Arone and Kankaanrinta calculate
the homology of $\mathfrak{S}_n$ with coefficients in $\Lie(n)$.
They show that it is zero unless $n=p^k$, and that
$H_*(\mathfrak{S}_{p^k}, {\rm Lie}(p^k))$ has a basis corresponding
to admissible sequences of Steenrod operations of length $k$.

The main motivation for our paper comes from the work of Selick and Wu \cite{SW}.
Their problem is to find natural homotopy decompositions
of the loop suspension of a  $p$-torsion suspension where $p$ is a prime.
In  \cite{SW} it is proved that this problem
is equivalent to the algebraic problem of finding natural
coalgebra decompositions of the
primitively generated tensor algebras over the field with $p$ elements.
They determine the finest coalgebra decomposition of
a tensor algebra  (over arbitrary fields), which can be described
as a  functorial Poincar\`{e}-Birkhoff-Witt theorem
\cite[Theorem 6.5]{SW}.
In order to compute the factors in this decomposition, one must know
a maximal projective submodule, called $\Liemax(n)$, of the Lie module
${\rm Lie}(n)$.

Since $\Lie(n)$ is a finite-dimensional module, and
since projective modules for symmetric groups are injective, there
is a direct sum decomposition of the form
$\Lie(n) = \Lie(n)_{pr} \oplus \Lie(n)_{pf}$, unique up to
isomorphism, where $\Lie(n)_{pr}$ is projective and $\Lie(n)_{pf}$ does not have any non-zero projective summand. Then $\Liemax(n)$ is isomorphic to
$\Lie(n)_{pr}$. As mentioned above, if $p$ does not divide $n$ then
$\Lie(n) = \Lie(n)_{pr}$; on the other hand
whenever $p$ divides $n$, the summand $\Lie(n)_{pf}$ is non-zero
(though, the homology of the symmetric group with coefficients in
$\Lie(n)$, that is, with coefficients in $\Lie(n)_{pf}$, is zero
if $n$ is not a power of $p$).

The projective modules for the symmetric groups over fields of
positive characteristic are not known.  Their structure depends
on the decomposition matrices for symmetric groups, and the determination of the latter is a famous open problem.
According to \cite{SW2}, it would
be interesting to know, even if the modules
cannot be computed precisely, how quickly the dimensions grow, and
whether or not the growth rate is exponential. Evidence in \cite{SW2}, for small cases in characteristic $2$, is that $\Liemax(n)$ is relatively
large compared with $\Lie(n)$ and this would correspond to
factors in the functorial PBW theorem being relatively small.

When $n=pk$ and $p$ does not divide $k$,
a parametrisation of the
indecomposable summands of $\Lie(n)_{pf}$ was given in \cite{ES}.
Here we exploit this
result to obtain an upper bound for the dimension of $\Lie^{\max}(n)$. The general principle is
quite easy.  If $P$ is a subgroup of a finite group
$G$, then one may consider the restriction of any $FG$-module $W$ to $FP$, which we denote as
$\Res^G_P W$.  Then $\Res^G_P (W_{pr})$ is a direct summand of $(\Res^G_P W)_{pr}$ and therefore
$$\dim W_{pr} \leq \dim\, (\Res^G_P W)_{pr} \leq \dim W - \dim\, (\Res^G_P W)_{pf}.
$$
Thus, when $G = \sym{n}$ and $W=\Lie(n)$, we have
$$\dim \Lie^{\max}(n) \leq (n-1)! - \dim\, (\Res^{\sym{n}}_P \Lie(n))_{pf}.
$$
In this paper, we take $P$ to be a Sylow $p$-subgroup of $\sym{n}$, and we
provide in particular a recursive formula for computing $\dim\, (\Res^{\sym{n}}_P \Lie(n))_{pf}$.

We give  an outline of this paper. After introducing notation, we reduce
in Section \ref{S:main} the problem of computing $\dim (\Res^{\sym{kp}}_P \Lie(pk))_{pf}$ to that of computing the number of certain right cosets.  Section \ref{S:Sylow} studies the Sylow $p$-subgroup $P$ of $\sym{pk}$, especially its elements of order $p$ which are fixed-point-free as elements of $\sym{pk}$.
In Section \ref{S:exist}, we obtain a `good' subset containing a transversal of the right cosets which we wish to parametrise, and proceed to obtain a transversal in Section \ref{S:unique}.
We end the paper with a recursive formula for the dimension
of
$(\Res^{\sym{kp}}_P \Lie(pk))_{pf}$, and we show that while this
dimension grows exponentially
with $k$, its ratio to $\dim(\Lie(kp))$ approaches zero as $k$ tends to infinity.
At present, no parametrisation of
the indecomposable summands of $\Lie(n)$ is known when  $n$ is divisible
by $p^2$, therefore the problem of  finding bounds for
the dimension of $\Lie^{\rm max}(n)$ cannot be approached in the
same way.

We refer the reader to
\cite{B}
for the necessary background on representation theory of finite groups.

\section{Notations} \label{S:not}

In this section, we introduce the notations to be used throughout this paper.

For $a,b \in \mathbb{Z}_{\geq 0}$ with $a< b$, let
\begin{align*}
[a,b] &= \{ x \in \mathbb{Z} \mid a \leq x \leq b \}, \\
(a,b] &= \{ x \in \mathbb{Z} \mid a < x \leq b \}, \\
[a,b) &= \{ x \in \mathbb{Z} \mid a \leq x < b \}.
\end{align*}

We denote by $\sym{n}$ the group of permutations
of the set $[1, n]$.
For $m < n$, we identify $\sym{m}$ with the subgroup of $\sym{n}$ fixing $(m, n]$ pointwise.

Let $a,b \in \mathbb{Z}^+$.
For $\sigma \in \sym{a}$, define
$\sigma^{[b]} \in \sym{ab}$ by
$$((i-1)b+  j) \sigma^{[b]} = (i\sigma-1)b +j
$$
for all $i \in [1,a]$ and $j \in [1,b]$, so that $\sigma^{[b]}$
permutes the $a$ successive blocks of size $b$ in $[1, ab]$
according to $\sigma$.
Clearly, the map $\sigma \mapsto \sigma^{[b]}$ is an injective group homomorphism.

For $\tau \in \sym{b}$ and $r \in [1,s]$, define $\tau[r] \in \sym{sb}$ by
$$
((i-1)b + j) \ \tau[r] =
\begin{cases}
(r-1)b + j\tau, &\text{if } i = r, \\
(i-1)b + j, &\text{otherwise},
\end{cases}
$$
for all $i \in [1,s]$ and $j \in [1,b]$, so that $\tau[r]$ acts on the $r$-th successive block of size $b$ in $[1, sb]$ according to $\tau$, and fixes everything else.  Note that as a permutation on the set $((r-1)b, rb]$, $\tau[r]$ is independent of $s$ (as long as $s \geq r$).
As this notation also depends on $b$ (i.e.\ the degree of the symmetric group in which $\tau$ lies), we will specify $b$ when it is unclear from the context what $b$ is.

In addition, define $\Delta_a \tau \in \sym{ab}$ by $\Delta_a \tau = \prod_{i=1}^{a} \tau[i]$, so that $\Delta_a \tau$ permutes each of the $a$ successive blocks of size $b$ in $[1, ab]$ simultaneously according to $\tau$.  Clearly, the maps $\tau \mapsto \tau[r]$ and $\tau \mapsto \Delta_a\tau$ are injective group homomorphisms.

If $H \subseteq \sym{a}$, $K \subseteq \sym{b}$ and $r \in \mathbb{Z}^+$, we write
\begin{align*}
H^{[b]} &= \{ h^{[b]} \mid h \in H \} ,\\
K[r] &= \{ k[r] \mid k \in K \} ,\\
\Delta_a K &= \{ \Delta_a k \mid k \in K \}.
\end{align*}

We note the following lemma, whose proof is straightforward.

\begin{lem} \label{L:ready}
Let $\sigma \in \sym{a}$ and $\tau \in \sym{b}$.
\begin{enumerate}
\item If $r \in [1,a]$, then $\tau[r]^{\sigma^{[b]}} = \tau[r \sigma]$.
\item $\sigma^{[b]}(\Delta_a \tau) = (\Delta_a \tau) \sigma^{[b]}$.
\end{enumerate}
\end{lem}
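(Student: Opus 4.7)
The plan is to prove both parts by direct computation on the generating set of elements $(i-1)b + j$ with $i \in [1,a]$, $j \in [1,b]$, exploiting the fact that the definitions of $\sigma^{[b]}$, $\tau[r]$, and $\Delta_a\tau$ are given pointwise on exactly such elements.

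For part (1), I would unwind $\tau[r]^{\sigma^{[b]}} = (\sigma^{[b]})^{-1}\tau[r]\sigma^{[b]}$ by tracking an arbitrary element $(i-1)b+j$ through the three permutations. Applying $(\sigma^{[b]})^{-1}$ sends it to $(i\sigma^{-1}-1)b+j$; the middle factor $\tau[r]$ then modifies the second coordinate precisely when $i\sigma^{-1}=r$, i.e.\ when $i=r\sigma$; and the final $\sigma^{[b]}$ restores the first coordinate to $i$ in the unaffected case and to $r\sigma$ in the affected case. The result is exactly the rule defining $\tau[r\sigma]$, proving (1).

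For part (2), I would not recompute anything but instead deduce it from (1). Since the permutations $\tau[1], \dots, \tau[a]$ have pairwise disjoint supports (the $r$-th one fixes everything outside the $r$-th block), they commute, so $\Delta_a\tau = \prod_{r=1}^{a}\tau[r]$ is independent of the order of the product. Conjugating by $\sigma^{[b]}$ and applying (1) termwise gives
\[
(\Delta_a\tau)^{\sigma^{[b]}} = \prod_{r=1}^{a}\tau[r]^{\sigma^{[b]}} = \prod_{r=1}^{a}\tau[r\sigma] = \Delta_a\tau,
\]
since $r\mapsto r\sigma$ is a bijection on $[1,a]$. Rearranging yields $\sigma^{[b]}(\Delta_a\tau) = (\Delta_a\tau)\sigma^{[b]}$.

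There is essentially no obstacle: both statements are bookkeeping identities about how blocks of size $b$ are permuted. The only place where one must take mild care is in part (1), where the direction of the conjugation and the position of $\sigma$ versus $\sigma^{-1}$ must be tracked correctly so that $r$ is indeed replaced by $r\sigma$ (and not $r\sigma^{-1}$); this is dictated by the convention that permutations act on the right.
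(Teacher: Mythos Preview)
Your proof is correct. The paper does not actually supply a proof of this lemma---it merely states that the verification is straightforward---so your direct computation is precisely the routine check the authors have in mind, and your deduction of (2) from (1) via the bijection $r\mapsto r\sigma$ is the natural way to organise it.
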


Assume $n=pk$. Given a set partition of $[1, n]$ into $k$
blocks of size $p$, we have a subgroup $D$ of $\sym{n}$ which is isomorphic
to $\sym{p}\times \sym{k}$ where each factor acts regularly: the factor $\sym{k}$ permutes the $k$ blocks according to $\sym{k}$, while the factor
$\sym{p}$ act on each of the $k$ blocks
simultaneously, and these two actions commute.
Any two such groups arising from two set partitions are conjugate in $\sym{n}$.

Such a group $D$
can be viewed as a subgroup of a wreath product $\sym{p}\wr \sym{k}$:  let $\sym{k}^{[p]}$ be the fixed top group; its centralizer in
the base group is the diagonal product $\Delta_k\sym{p}$ which is isomorphic
to $\sym{p}$, and then one can take  $D=\Delta_k\sym{p}\times \sym{k}^{[p]}$.

\section{The problem} \label{S:main}

Assume $F$ is a field of characteristic $p$, and let $k \in \mathbb{Z}^+$ with $p \nmid k$ and let $n=pk$.
In \cite[\S 4]{ES}, the module $\Lie(n)$ is studied via
a different module, called
the $p$-th symmetrisation of $\Lie(k)$, denoted by $S^p(\Lie(k))$; we will give
a precise definition below.
This module is related to $\Lie(n)$ as follows.

\begin{thm}{\cite[Theorem 10]{ES}} Assume  $n=pk$ where
$p$ does not divide $k$. Then
there is a short exact sequence of right $F\sym{n}$-modules
$$
0 \to \Lie(n) \to e F\sym{n} \to S^p(\Lie(k)) \to 0
$$
where $e$ is an idempotent in $\sym{n}$.
\end{thm}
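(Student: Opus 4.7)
The plan is to exhibit the idempotent $e$ and the two maps of the sequence explicitly, and then verify exactness. A natural candidate for $e$ exploits the hypothesis $p \nmid k$: since $k$ is a unit in $F$, the relation $\omega_k^2 = k\omega_k$ makes $e_k := \omega_k/k$ an idempotent in $F\sym{k}$ with $e_k F\sym{k} \cong \Lie(k)$. Lifting via the top-group embedding $\sigma \mapsto \sigma^{[p]}$ of Section~\ref{S:not} yields the idempotent $e := (1/k)\omega_k^{[p]} \in F\sym{n}$. The resulting module is isomorphic to $\Ind_{\sym{k}^{[p]}}^{\sym{n}}(\Lie(k))$, and a right-coset computation gives $\dim eF\sym{n} = (k-1)! \cdot n!/k! = n!/k$, which is already consistent with the required identity $\dim\Lie(n) + \dim S^p(\Lie(k)) = (n-1)! + (p-1)(n-1)!$.

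For the inclusion $\Lie(n) \hookrightarrow eF\sym{n}$ it is enough to show $e\omega_n = \omega_n$, equivalently $\omega_k^{[p]}\omega_n = k\omega_n$. I would attempt this identity by combining the recursive form $\omega_n = \omega_{n-1}(1 - c_n)$ with Lemma~\ref{L:ready} (especially the commutation $\sigma^{[p]}(\Delta_k\tau) = (\Delta_k\tau)\sigma^{[p]}$) to rewrite $\omega_n$ in a form that makes $\omega_k^{[p]}$ explicit on the left, and then use $(\omega_k^{[p]})^2 = k\omega_k^{[p]}$ to collapse. For the surjection onto $S^p(\Lie(k))$, I would exploit the wreath product picture of Section~\ref{S:not}: the subgroup $\sym{p}\wr\sym{k}$ sits between $\sym{k}^{[p]}$ and $\sym{n}$, and $S^p(\Lie(k))$ should be realised as a quotient of $eF\sym{n}$ obtained by the appropriate identification on the base-group factor $\sym{p}^{\times k}$, encoding a symmetrisation under the diagonal $\Delta_k\sym{p}$. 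The map $eF\sym{n} \twoheadrightarrow S^p(\Lie(k))$ is then the canonical quotient by this symmetrisation, and its surjectivity is built into the construction.

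The main obstacle is the identity $\omega_k^{[p]}\omega_n = k\omega_n$, because the cycle structures appearing in $\omega_k^{[p]}$ (top-group permutations of blocks of size $p$) and in $\omega_n = \prod_{j=2}^{n}(1-c_j)$ (Coxeter-type generators of $\sym{n}$) do not interact in any obviously commuting fashion; one must match them up through a delicate rearrangement. Once the inclusion is established, exactness follows from the dimension match: $\dim eF\sym{n} = p(n-1)!$ and $\dim \Lie(n) = (n-1)!$, so the cokernel has dimension $(p-1)(n-1)!$, which must coincide with $\dim S^p(\Lie(k))$; together with the explicit construction of the surjection, this forces the sequence to be short exact.
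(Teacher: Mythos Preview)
This theorem is quoted from \cite{ES} and is not proved in the present paper, so there is no in-paper proof to compare against. That said, your proposal contains a concrete error that makes the approach fail for $p>2$.

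Your dimension count for $S^p(\Lie(k))$ is wrong. With $D = \Delta_k\sym{p} \times \sym{k}^{[p]}$ one has $|D| = p!\,k!$, so
\[
\dim S^p(\Lie(k)) \;=\; \dim(\Lambda_k)\cdot[\sym{n}:D] \;=\; (k-1)!\cdot\frac{n!}{p!\,k!} \;=\; \frac{n!}{p!\,k},
\]
not $(p-1)(n-1)!$ as you assert. These two numbers coincide only when $p=2$. Consequently the idempotent $e$ in the statement must satisfy $\dim eF\sym{n} = (n-1)! + n!/(p!\,k)$, whereas your candidate $e = \tfrac{1}{k}\omega_k^{[p]}$ gives $\dim eF\sym{n} = n!/k = p(n-1)!$. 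For instance with $p=3$, $k=2$ the required dimension is $180$, but your module has dimension $360$. So your choice of $e$ is wrong for every odd prime, and with it the ``dimension match'' on which you base exactness collapses: even if $\Lie(n)$ did embed into your $eF\sym{n}$, the cokernel would have dimension $(p-1)(n-1)!$, which is not the dimension of $S^p(\Lie(k))$.

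The construction in \cite{ES} proceeds via the Solomon descent algebra (as the title of that paper indicates): the relevant idempotent and the identification of the cokernel with the symmetrised Lie power are obtained from descent-algebra identities, not from a direct manipulation of $\omega_k^{[p]}\omega_n$. Your instinct to look for an identity of the shape $e\omega_n=\omega_n$ is reasonable, but the correct $e$ is not the one you wrote down, and you would need to consult \cite{ES} for it.
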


As a corollary, we see that $\Omega(S^p(\Lie(k))) \cong \Lie(n)_{pf}$
(where here, and hereafter, $\Omega$ is the Heller operator, taking a module to the kernel
of its projective cover).

To define $S^p(\Lie(k))$, take the regular subgroup $D = \Delta_k(\sym{p})
\times \sym{k}^{[p]}$ as described at the end of \S 2.  Let
$\Lambda_k$ be the outer tensor product $\Lambda_k = F\boxtimes \Lie(k)$,
and take $S^p(\Lie(k)) \cong \Ind_D^{\sym{n}} \Lambda_k$.
[The module is the same as that given in \cite{ES}].
The action of $\Delta_k\sym{p}$ on $\Lambda_k$ is trivial, while that of
$\sym{k}^{[p]}$ on $\Lambda_k$ is equivalent to that of $\sym{k}$ on $\Lie(k)$.

Let $P$ be a fixed Sylow $p$-subgroup of $\sym{n}$.  By Mackey's formula, we have
\begin{align*}
\Res^{\sym{n}}_P S^p(\Lie(k)) &\cong \Res^{\sym{n}}_P \Ind_D^{\sym{n}} \Lambda_k
= \bigoplus_{x \in D / \sym{n} \setminus P} \Ind_{D^x \cap P}^P (\Lambda_k \otimes x).
\end{align*}

\bigskip

\begin{prop} \label{P:iff} \hfill
\begin{enumerate}
\item If $(\Delta_k\sym{p})^x \cap P = 1$, then $\Ind_{D^x \cap P}^P (\Lambda_k \otimes x)$ is projective.
\item If $(\Delta_k\sym{p})^x \cap P \ne 1$, then $\Ind_{D^x \cap P}^P (\Lambda_k \otimes x)$ has no projective summand.
\end{enumerate}
\end{prop}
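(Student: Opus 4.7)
The two key ingredients are: (i) $\Delta_k\sym{p}$ acts trivially on $\Lambda_k$ by construction, while the complementary factor $\sym{k}^{[p]}\cong\sym{k}$ acts as $\sym{k}$ does on $\Lie(k)$; and (ii) $\Lie(k)$ is a projective $F\sym{k}$-module, since $p\nmid k$ and $\omega_k^2 = k\omega_k$. Let $H := D^x\cap P$ and $Q_0 := (\Delta_k\sym{p})^x\cap P$. Because $\Delta_k\sym{p}$ is a direct factor of $D$, $(\Delta_k\sym{p})^x$ is normal in $D^x$, whence $Q_0\trianglelefteq H$; and $(\Delta_k\sym{p})^x$, and hence $Q_0$, acts trivially on $\Lambda_k\otimes x$.

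For (1), with $Q_0 = 1$, the projection $\pi\colon D^x\twoheadrightarrow (\sym{k}^{[p]})^x$ (whose kernel is $(\Delta_k\sym{p})^x$) is injective on $H$, so $H$ embeds as a $p$-subgroup of $(\sym{k}^{[p]})^x\cong\sym{k}$. The action of $H$ on $\Lambda_k\otimes x$ factors through $\pi|_H$ and, under this identification, agrees with the restriction of $\Lie(k)$. Since $\Lie(k)$ is projective over $F\sym{k}$, its restriction to the $p$-subgroup $\pi(H)$ is projective, hence free. Therefore $\Lambda_k\otimes x$ is a free $FH$-module, and $\Ind_H^P(\Lambda_k\otimes x)$ is a free, in particular projective, $FP$-module.

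For (2), with $Q_0\neq 1$, I shall invoke the following multiplicity formula: for any $H\le P$ and $FH$-module $N$, the number of $FP$-summands of $\Ind_H^P N$ equals $\dim_F \mathrm{Nm}_H(N)$, where $\mathrm{Nm}_K:=\sum_{g\in K} g$. This holds because $FP$ is a local Frobenius algebra with one-dimensional socle $F\cdot\mathrm{Nm}_P$, so for any $FP$-module $M$ the image $\mathrm{Nm}_P(M)$ has dimension equal to the multiplicity of $FP$ in $M$ (because $\mathrm{Nm}_P$ kills every non-projective indecomposable $FP$-module via a standard Frobenius argument); applied to $M = \Ind_H^P N = FP\otimes_{FH} N$, the computation $\mathrm{Nm}_P\otimes n = \sum_{g\in P/H} g\otimes\mathrm{Nm}_H(n)$ gives $\dim\mathrm{Nm}_P(\Ind_H^P N) = \dim\mathrm{Nm}_H(N)$. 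Now with $N=\Lambda_k\otimes x$, decompose $\mathrm{Nm}_H = \bigl(\sum_{g\in H/Q_0}g\bigr)\mathrm{Nm}_{Q_0}$ using a coset transversal of $Q_0$ in $H$. Since $Q_0$ acts trivially on $N$, $\mathrm{Nm}_{Q_0}$ acts as the scalar $|Q_0|\equiv 0\pmod p$; hence $\mathrm{Nm}_H(N)=0$, and $\Ind_H^P(\Lambda_k\otimes x)$ has no projective summand. The main obstacle is the multiplicity formula relating the image of the norm element to the count of free summands, but this is a standard piece of $p$-group representation theory; everything else is direct manipulation of the hypothesis on $Q_0$.
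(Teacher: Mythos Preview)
Your proof is correct. Both parts rest on the same underlying facts as the paper's proof---that $\Delta_k\sym{p}$ acts trivially on $\Lambda_k$ and that $\Lie(k)$ is projective over $F\sym{k}$---but you package them differently.

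For (1), the paper argues indirectly: it observes that $\Lambda_k$ is relatively $R$-projective for $R$ a Sylow $p$-subgroup of $\Delta_k\sym{p}$, and then applies Mackey's formula to $\Res_Q\Ind_R^D U$ together with the fact that $R^d\cap Q=1$ for all $d$. Your route is more direct: since the action of $D^x$ on $\Lambda_k\otimes x$ factors through the projection to $(\sym{k}^{[p]})^x$, and this projection is injective on $H$ once $Q_0=1$, the $FH$-module $\Lambda_k\otimes x$ is literally the restriction of the projective module $\Lie(k)$ to a $p$-subgroup of $\sym{k}$, hence free. This avoids the second pass through Mackey and the relative-projectivity machinery entirely.

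For (2), the paper simply asserts in one line that a nontrivial $(\Delta_k\sigma)^x$ acting trivially on $\Lambda_k\otimes x$ precludes any projective summand of the induced module. Your argument via the norm element $\mathrm{Nm}_P$ and the factorisation $\mathrm{Nm}_H=(\sum_{g\in H/Q_0}g)\,\mathrm{Nm}_{Q_0}$ is exactly the standard justification of that one-liner, made explicit. The identity $\dim\mathrm{Nm}_P(\Ind_H^P N)=\dim\mathrm{Nm}_H(N)$ you use is correct (and follows, as you indicate, from $\mathrm{Nm}_P\cdot(g\otimes n)=\sum_{t\in P/H}t\otimes\mathrm{Nm}_H(n)$ together with the fact that for a $p$-group the number of free summands of $M$ equals $\dim\mathrm{Nm}_P(M)$). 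So here the two proofs are the same in substance; yours is just more self-contained.
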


\begin{proof}
If $(\Delta_k\sym{p})^x \cap P = 1$, then $\Delta_k\sym{p} \cap P^{x^{-1}} =1$; we claim that in this case $\Res_{D \cap P^{x^{-1}}} \Lambda_k$ is projective, so that (1) follows.

To prove the claim, let $Q = D \cap P^{x^{-1}}$, then
$\Delta_k\sym{p} \cap Q= 1$.
If $R$ is a Sylow $p$-subgroup of $\Delta_k\sym{p}$, then all $D$-conjugates of $R$ lie in $\Delta_k\sym{p}$, since $\Delta_k\sym{p}$ is normal in $D$; thus $R^d \cap Q = 1$ for all $d \in D$.
Now, $\Lambda_k$ is by construction relatively $R$-projective, so that $\Lambda_k$ is a direct summand of $\Ind^D_R U$ for some $R$-module $U$.  It follows that $\Res_Q \Lambda_k$ is a direct summand of $\Res_Q \Ind^D_R U$.  But by Mackey's formula, $\Res_Q \Ind^D_R U = \bigoplus_{d \in R / D \setminus Q} \Ind_{R^d \cap Q}^Q (U \otimes x)$.  Since $R^d \cap Q = 1$, each summand $\Ind_{R^d \cap Q}^Q (U \otimes x)$ is projective.  Thus, $\Res_Q \Ind^D_R U$ and $\Res_Q \Lambda_k$ are projective.


If $(\Delta_k\sym{p})^x \cap P \ne 1$, let $\sigma \in \sym{p}$ such that
$(\Delta_k\sigma)^x \in (\Delta_k\sym{p})^x \cap P$.
Then since $\Delta_k\sigma$ acts trivially on the entire module
$\Lambda_k$, we see that $\Ind_{D^x \cap P}^P (\Lambda_k \otimes x)$ cannot have any projective summand.
\end{proof}

In view of Proposition \ref{P:iff}, let $S$ be the set of all double
coset representatives in
$D / \sym{n}\setminus P$ such that $(\Delta_k\sym{p})^x\cap P\neq 1$.  Then we have

\begin{cor} \label{C:iff}
Assume $n = pk$ with $p \nmid k$.  Then
$$(\Res^{\sym{n}}_P S^p(\Lie(k)))_{pf}  \cong \bigoplus_{x \in S} \Ind_{D^x \cap P}^P (\Lambda_k \otimes x).$$
\end{cor}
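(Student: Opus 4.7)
The plan is to combine the Mackey decomposition of $\Res^{\sym{n}}_P S^p(\Lie(k))$ displayed just before Proposition \ref{P:iff} with the dichotomy provided by that proposition, and then appeal to the uniqueness of the splitting $M \cong M_{pr} \oplus M_{pf}$.

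First, I would split the indexing set $D/\sym{n}\setminus P$ into $S$ and its complement $S^c$, rewriting the Mackey decomposition as
$$\Res^{\sym{n}}_P S^p(\Lie(k)) \cong \Bigl(\bigoplus_{x \in S^c} \Ind_{D^x \cap P}^P (\Lambda_k \otimes x)\Bigr) \oplus \Bigl(\bigoplus_{x \in S} \Ind_{D^x \cap P}^P (\Lambda_k \otimes x)\Bigr).$$
By Proposition \ref{P:iff}(1), the first direct sum is a sum of projective $FP$-modules and is therefore projective. By Proposition \ref{P:iff}(2), each summand of the second sum has no nonzero projective direct summand. Since the property of having no nonzero projective summand is preserved under finite direct sums---an immediate consequence of the Krull--Schmidt theorem applied to the indecomposable decompositions of the individual summands---the second direct sum also has no nonzero projective summand.

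To conclude, I would invoke the fact that the decomposition $M \cong M_{pr} \oplus M_{pf}$ of a finite-dimensional $FP$-module is unique up to isomorphism (again by Krull--Schmidt, together with the fact that projective summands can be collected on one side). Since we have exhibited such a splitting for $\Res^{\sym{n}}_P S^p(\Lie(k))$, the projective-free part must be isomorphic to $\bigoplus_{x \in S} \Ind_{D^x \cap P}^P (\Lambda_k \otimes x)$, as desired.

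There is no substantive obstacle here: Proposition \ref{P:iff} has done essentially all the work, and what remains is the formal Krull--Schmidt bookkeeping described above. The only subtlety worth flagging explicitly in the write-up is the step that a finite direct sum of modules, each having no nonzero projective summand, still has no nonzero projective summand---and this is what allows us to identify the second block of the splitting with the projective-free part of the restriction.
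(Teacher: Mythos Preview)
Your proposal is correct and matches the paper's approach: the paper states the corollary without a separate proof, treating it as an immediate consequence of Proposition \ref{P:iff} together with the Mackey decomposition and the uniqueness (up to isomorphism) of the splitting $M \cong M_{pr} \oplus M_{pf}$. Your write-up simply makes explicit the Krull--Schmidt bookkeeping that the paper leaves implicit.
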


The following identifies
the $\Omega$-translate of this module.

\begin{lem}\label{prepare-P:iff}
For $x\in S$ we have
$$\Omega(\Ind_{D^x\cap P}^P(\Lambda_k\otimes x))
\cong \Ind_{D^x\cap P}^P ((\Omega(F)\boxtimes \Lie(k))\otimes x),
$$ and
$\Omega(F)$ has dimension $p-1$.
\end{lem}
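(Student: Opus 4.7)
The plan is to identify the projective cover of $\Lambda_k$ as an $FD$-module, then transport the resulting Heller-translate sequence through twisting by $x$, restriction to $H := D^x \cap P$ and induction to $P$, controlling any spurious projective summands by a vertex argument.

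First, since $p \nmid k$, the module $\Lie(k)$ is projective over $F\sym{k}^{[p]}$, so $P_F \boxtimes \Lie(k)$ is a projective $FD$-module, where $P_F \twoheadrightarrow F$ is the projective cover of the trivial $F\Delta_k\sym{p}$-module. The induced surjection $P_F \boxtimes \Lie(k) \twoheadrightarrow F \boxtimes \Lie(k) = \Lambda_k$ is essential, because the tops on both sides coincide with $F \boxtimes \func{top}(\Lie(k))$. Hence $\Omega_D(\Lambda_k) \cong \Omega(F) \boxtimes \Lie(k)$, and twisting by $x$ yields an analogous projective cover sequence of $FD^x$-modules.

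Next, restrict this sequence to $H$ and induce to $P$. Both functors are exact and carry projective modules to projective modules; moreover, induction from a subgroup of the $p$-group $P$ preserves essential surjections, since Frobenius reciprocity gives $\func{Hom}_P(\Ind_H^P M, F) = \func{Hom}_H(M, F)$, matching the dimensions of the tops. Writing $N := \Ind_H^P \Res^{D^x}_H((\Omega(F) \boxtimes \Lie(k)) \otimes x)$ and $M := \Ind_H^P \Res^{D^x}_H(\Lambda_k \otimes x)$, I obtain an exact sequence $0 \to N \to Q \to M \to 0$ in which $Q$ is a projective $FP$-module. Schanuel's lemma then implies that $N$ and $\Omega_P(M)$ agree up to a projective summand.

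The main obstacle is ruling out such a projective summand, i.e.\ showing that $N$ has no projective $FP$-summand. I would argue by vertices: $\Omega(F)$ has the same vertex as $F$ over $\Delta_k\sym{p}$, namely a Sylow $\Delta_k C_p$ of order $p$, while $\Lie(k)$ has trivial vertex (being projective). Hence $\Omega(F) \boxtimes \Lie(k)$ has vertex $\Delta_k C_p$ as an $FD$-module, and by the standard behaviour of vertices under restriction and induction, every indecomposable summand of $N$ has vertex of order at most $p$. A projective $FP$-summand would require vertex $P$, which has order $p^{k+v_p(k!)} > p$ as soon as $k \geq 2$; the edge case $k = 1$ (so $D = \sym{p}$ and $|P| = p$) is checked directly, since there $N = \Res_{C_p}\Omega(F)$ has dimension $p - 1 < p = |P|$.

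Finally, for the dimension statement: the principal block of $F\sym{p}$ is a Brauer tree algebra whose tree is the open path with $p-1$ edges and trivial multiplicity, with $F = D^{(p)}$ at one end; the neighbouring edge is $D^{(p-1,1)}$. The decomposition $S^{(p-1,1)} = D^{(p)} + D^{(p-1,1)}$ (read off the Brauer tree), together with $\dim S^{(p-1,1)} = p-1$, gives $\dim D^{(p-1,1)} = p-2$. Therefore $P_F$ is uniserial with composition factors $F, D^{(p-1,1)}, F$ and dimension $p$, yielding $\dim \Omega(F) = p - 1$.
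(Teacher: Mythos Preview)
Your overall strategy --- build the short exact sequence $0\to N\to Q\to M\to 0$ from the projective cover of $\Lambda_k$, then argue that $N$ has no projective summand --- is exactly the paper's approach, and your Schanuel step is fine. The problem is the vertex argument you use to exclude a projective summand of $N$.

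You write that ``a projective $FP$-summand would require vertex $P$''. This is false: an indecomposable projective module always has \emph{trivial} vertex, not vertex $P$. So the upper bound you establish (every indecomposable summand of $N$ has vertex of order at most $p$) is perfectly compatible with the presence of projective summands, and your argument collapses at precisely the point where you need it. The $k=1$ edge case you handle separately is fine, but for $k\ge 2$ you have proved nothing.

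The paper closes this gap by using the very definition of $S$. Since $x\in S$ there is a nontrivial $\sigma\in\sym{p}$ with $(\Delta_k\sigma)^x\in H=D^x\cap P$; this gives a cyclic subgroup $C\cong C_p$ inside $H$. Restricting $(\Omega(F)\boxtimes\Lie(k))\otimes x$ to $C$: the element $(\Delta_k\sigma)^x$ acts trivially on the $\Lie(k)$ factor, while $\Omega(F)$ restricted to $\langle\sigma\rangle$ is the indecomposable $FC_p$-module of dimension $p-1$, which is not free. Hence $\Res_C$ of the module has no free summand, so the module itself has no free $FH$-summand, and therefore $N=\Ind_H^P(\cdots)$ has no free $FP$-summand. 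If you want to phrase this in vertex language, what you actually need is a \emph{lower} bound: each summand has nontrivial vertex. The explicit $C_p$ inside $H$ provides exactly that.

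Your computation of $\dim\Omega(F)=p-1$ via the Brauer tree of $F\sym{p}$ is correct but heavier than necessary. The paper simply observes that the natural $p$-dimensional permutation module of $\sym{p}$ is indecomposable projective with $F$ as its top, so it is the projective cover $P_F$, and $\Omega(F)$ is its radical of dimension $p-1$.
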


\begin{proof}
Recall that $\Lambda_k \cong F \boxtimes \Lie(k)$.
Since  $\Lie(k)$ is a projective $F\sym{k}$-module, we see that $\Omega(\Lambda_k) \cong \Omega(F) \boxtimes \Lie(k)$.  Furthermore, $\Omega(F)$ can be described as follows:
the natural $p$-dimensional permutation module of $F\sym{p}$ is indecomposable projective and has $F$ as a quotient, so that $\Omega(F)$ is its maximal submodule, of dimension $(p-1)$.
Moreover, $\Omega(F)$ remains indecomposable when restricted to any subgroup of $\sym{p}$ of order $p$.  Now, the short exact sequence
\begin{equation*}
0 \to \Omega(F) \boxtimes \Lie(k) \to P(\Lambda_k) \to \Lambda_k \to 0,
\end{equation*}
where $P(\Lambda_k)$ denotes the projective cover of $\Lambda_k$, gives the following short exact sequence
\begin{multline}
0 \to \Ind^P_{D^x \cap P} ((\Omega(F) \boxtimes \Lie(k)) \otimes x) \to \Ind^P_{D^x \cap P} (P(\Lambda_k) \otimes x) \\ \to \Ind^P_{D^x \cap P} (\Lambda_k \otimes x) \to 0.  \tag{$*$}
\end{multline}
Let $1\ne \sigma \in \sym{p}$ such that $
(\Delta_k\sigma)^x \in
(\Delta_k\sym{p})^x \cap P$.  Then $(\Delta_k\sigma)^x$ acts trivially on
$\Lie(k)$, and $\Omega(F)$ is indecomposable as a module for $\left<
\Delta_k\sigma  \right>$
(as $\Delta_k\sigma$ has order $p$) and has dimension $(p-1)$.  It follows that $\Ind^P_{D^x \cap P} ((\Omega(F) \boxtimes \Lie(k)) \otimes x)$ has no projective summand.
Thus, from $(*)$, we see that
$$\Omega(\Ind^P_{D^x \cap P} (\Lambda_k \otimes x)) \cong \Ind^P_{D^x \cap P} ((\Omega(F) \boxtimes \Lie(k)) \otimes x)$$
since $\Ind^P_{D^x \cap P} (P(\Lambda_k) \otimes x)$ is projective.
\end{proof}

The following is the main result of this section.

\begin{thm} \label{T:main}
Assume $n = pk$ with $p \nmid k$.
We have
$$\dim ((\Res^{\sym{n}}_P \Lie(n))_{pf}) = (p-1) (k-1)! \sum_{x\in S} [P:D^x \cap P].$$
\end{thm}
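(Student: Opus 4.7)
My plan is to combine the isomorphism $\Lie(n)_{pf} \cong \Omega(S^p(\Lie(k)))$ noted after Theorem 3.1 with Corollary \ref{C:iff} and Lemma \ref{prepare-P:iff}, translating through restriction and the Heller operator, before finishing with an elementary dimension count.

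\emph{Reduction to a sum of induced modules.} Since $\Lie(n)_{pr}$ is projective over $F\sym{n}$, its restriction to $P$ is a free $FP$-module; deleting this summand,
$$(\Res^{\sym{n}}_P \Lie(n))_{pf} \cong (\Res^{\sym{n}}_P \Lie(n)_{pf})_{pf} \cong (\Res^{\sym{n}}_P \Omega(S^p(\Lie(k))))_{pf}.$$
Restricting a projective cover of $S^p(\Lie(k))$ to $P$ yields a (possibly non-minimal) projective presentation of $\Res^{\sym{n}}_P S^p(\Lie(k))$, so that $\Res^{\sym{n}}_P \Omega(S^p(\Lie(k)))$ and $\Omega_P(\Res^{\sym{n}}_P S^p(\Lie(k)))$ agree up to a free $FP$-summand (here $\Omega_P$ is the Heller operator over $FP$). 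Since $FP$ is self-injective, $\Omega_P$ preserves the property of being projective-free, so combining with Corollary \ref{C:iff} I get
$$(\Res^{\sym{n}}_P \Lie(n))_{pf} \cong \Omega_P\!\left(\bigoplus_{x\in S} \Ind_{D^x\cap P}^P (\Lambda_k \otimes x)\right).$$

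\emph{Applying Lemma \ref{prepare-P:iff} and counting dimensions.} Applying the lemma to each summand (which is projective-free by the proof of Proposition \ref{P:iff}(2), so no projectives need to be discarded before taking $\Omega_P$) gives
$$(\Res^{\sym{n}}_P \Lie(n))_{pf} \cong \bigoplus_{x\in S} \Ind_{D^x\cap P}^P\bigl((\Omega(F) \boxtimes \Lie(k)) \otimes x\bigr).$$
Each summand has dimension
$$[P : D^x \cap P]\cdot \dim(\Omega(F) \boxtimes \Lie(k)) = [P : D^x \cap P]\cdot (p-1)(k-1)!,$$
using $\dim \Omega(F) = p-1$ from Lemma \ref{prepare-P:iff} and the classical formula $\dim \Lie(k) = (k-1)!$. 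Summing over $x \in S$ yields the stated identity.

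\emph{Expected difficulty.} The argument is essentially an assembly of earlier results. The only delicate point is the bookkeeping when swapping $\Res^{\sym{n}}_P$ with both $\Omega$ and with the projective-free decomposition, making sure no projective summands are lost or spuriously created. The fact that $FP$ (with $P$ a $p$-group in characteristic $p$) is a local, self-injective algebra makes this bookkeeping painless: $\Omega_P$ kills projective summands and preserves projective-freeness, matching exactly what the stable module category dictates.
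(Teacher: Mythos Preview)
Your proof is correct and follows essentially the same approach as the paper: both reduce to identifying $(\Res_P \Lie(n))_{pf}$ with $\bigoplus_{x\in S}\Omega_P(\Ind_{D^x\cap P}^P(\Lambda_k\otimes x))$ via Corollary~\ref{C:iff}, then invoke Lemma~\ref{prepare-P:iff} and count dimensions. The only cosmetic difference is that the paper restricts the short exact sequence $0\to\Lie(n)\to eF\sym{n}\to S^p(\Lie(k))\to 0$ to $P$ directly (using that $\Res_P(eF\sym{n})$ is projective) to obtain $(\Res_P\Lie(n))_{pf}\cong\Omega_P(\Res_P S^p(\Lie(k)))$ in one step, whereas you pass first through $\Lie(n)_{pf}\cong\Omega(S^p(\Lie(k)))$ at the $\sym{n}$-level and then commute $\Omega$ with restriction; the paper's route avoids that extra bookkeeping but the content is the same.
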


\begin{proof}
Restriction is  exact, hence we have a short exact sequence
$$
0 \to \Res_P \Lie(n) \to \Res_P (e F\sym{n}) \to \Res_P S^p(\Lie(k)) \to 0.$$
Since $\Res_P (e F\sym{n})$ remains projective as an $FP$-module, we see that
\begin{align*}
(\Res_P \Lie(n))_{pf} \cong \Omega(\Res_P S^p(\Lie(k))
\cong \bigoplus_{x\in S} \Omega(\Ind_{D^x \cap P}^P (\Lambda_k \otimes x))
\end{align*}
by Corollary \ref{C:iff}. Now, by \ref{prepare-P:iff}, we
have
$\Omega(\Ind_{D^x\cap P}^P (\Lambda_k\otimes x))$ for $x\in S$
 is isomorphic to
$\Ind_{D^x\cap P}^P((\Omega(F)\boxtimes \Lie(k))\otimes x)$ and that
$\Omega(F)$ has dimension $p-1$.
This proves the Theorem.
\end{proof}

\begin{cor} \label{C:main}
We have
$$\dim ((\Res^{\sym{kp}}_P \Lie(kp))_{pf}) = (p-1) (k-1)! N,$$
where $N$ is the number of cosets $Dx$ such that $(\Delta_k\sym{p})^x \cap P \ne 1$.
\end{cor}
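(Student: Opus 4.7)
The plan is to derive the corollary directly from Theorem \ref{T:main} by reinterpreting the sum $\sum_{x \in S}[P : D^x \cap P]$ as a count of right cosets. The theorem already gives
$$\dim ((\Res^{\sym{n}}_P \Lie(n))_{pf}) = (p-1)(k-1)! \sum_{x \in S}[P : D^x \cap P],$$
so it suffices to show that the sum on the right equals $N$, the number of right cosets $Dy$ in $\sym{n}$ satisfying $(\Delta_k\sym{p})^y \cap P \neq 1$.

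The key observation is that the condition $(\Delta_k\sym{p})^x \cap P \neq 1$ is constant on each double coset $DxP$. Indeed, if $y = dxq$ with $d \in D$ and $q \in P$, then because $\Delta_k\sym{p}$ is normal in $D = \Delta_k\sym{p} \times \sym{k}^{[p]}$ we have $(\Delta_k\sym{p})^d = \Delta_k\sym{p}$, and hence
$$(\Delta_k\sym{p})^y \cap P = ((\Delta_k\sym{p})^x)^q \cap P = \bigl((\Delta_k\sym{p})^x \cap P\bigr)^q,$$
using $P^q = P$. Thus $(\Delta_k\sym{p})^y \cap P = 1$ if and only if $(\Delta_k\sym{p})^x \cap P = 1$, so the set of right cosets $Dy$ counted by $N$ is exactly the union of those right cosets lying in double cosets $DxP$ with $x \in S$.

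It remains to count the right cosets $Dy$ contained in a single double coset $DxP$. Standard double coset arithmetic gives
$$|DxP| = \frac{|D|\,|P|}{|x^{-1}Dx \cap P|} = \frac{|D|\,|P|}{|D^x \cap P|},$$
so the number of right cosets $Dy$ contained in $DxP$ equals $[P : D^x \cap P]$. Summing over the double cosets indexed by $S$ yields
$$N = \sum_{x \in S}[P : D^x \cap P],$$
and substituting into Theorem \ref{T:main} gives the claim. There is no substantial obstacle here; the only care needed is the verification that the condition on $(\Delta_k\sym{p})^x \cap P$ descends from double cosets to right cosets, which uses the normality of $\Delta_k\sym{p}$ in $D$.
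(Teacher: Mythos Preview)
Your proof is correct and follows essentially the same approach as the paper: both reduce to showing $N = \sum_{x\in S}[P:D^x\cap P]$ by checking that the condition $(\Delta_k\sym{p})^x\cap P\neq 1$ is constant on each double coset $DxP$, and then counting the right $D$-cosets inside each such double coset. The only cosmetic difference is that the paper phrases the count via the $P$-orbit of $Dy$ in the coset space $(\sym{n}:D)$, whereas you use the cardinality formula $|DxP|=|D|\,|P|/|D^x\cap P|$; these are equivalent.
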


\begin{proof}
Using Theorem \ref{T:main} we must show that $N = N'$ where
$N'= \sum_{y\in S} [P:D^y\cap P]$.
The index $[P:D^y\cap P]$ is equal to the size of the $P$-orbit
of $Dy$ in the coset space $(\sym{n}:D)$, so it is equal to the number of cosets $Dx$ contained in
$DyP$.

Write $D_1 = \Delta_k\sym{p}$.
If a coset $Dx$ is contained in $DyP$ then $D_1^x\cap P$ is $P$-conjugate
to $D_1^y\cap P$ and hence  $D_1^y\cap P\neq 1$ if and only if
$D_1^x\cap P\neq 1$. Conversely if $Dx$ is a coset and $D_1^x\cap P\neq 1$ then $Dx$ is contained in one of the double cosets counted for $N'$.
We sum over all such
double cosets,  and hence
$N= N'$.
\end{proof}

\medskip

\medskip


Corollary \ref{C:main} suggests that we should proceed by parametrising the right cosets $Dx$ such that $(\Delta_k \sym{p})^x\cap P\neq 1$.
When $(\Delta_k \sym{p})^x\cap P\neq 1$, this is a non-trivial $p$-subgroup of $P$ conjugate to a subgroup of
$\Delta_k \sym{p}$, and hence it is generated by an element $y \in P$ of order $p$ which is fixed-point-free as an element of $\sym{kp}$.  We shall study these elements in the next section.

\section{Sylow $p$-subgroups of symmetric groups} \label{S:Sylow}

We analyze the Sylow $p$-subgroups of symmetric groups, especially its fixed-point-free elements of order $p$, in this section.
In order to exploit the orbit structure of such elements,
we make the following definitions.

\begin{Def}
Let $n \in \mathbb{Z}^+$.
\begin{enumerate}
\item A {\em partition} of the interval $(0,n]$ is an increasingly ordered subset $\{a_0,\dotsc, a_s\}$ of $(0,n]$ with $a_0 = 0$ and $a_s = n$.
\item A partition $\{b_0,\dotsc, b_t\}$ of $(0,n]$ is {\em finer} than $\{a_0,\dotsc, a_s\}$ provided for each $r \in [1,s]$, there exists $j_r \in [1,t]$ such that $a_r = b_{j_r}$.
\item Let $\sigma \in \sym{n}$.
The partition $\{a_0,\dotsc, a_s\}$ of $(0,n]$ is {\em $\sigma$-invariant} provided for each $r \in [1,s]$, the subinterval $(a_{r-1},a_r]$ is $\sigma$-invariant, that is \ $\sigma(a_{r-1},a_r] = (a_{r-1},a_r]$.
\end{enumerate}
\end{Def}

\begin{lem} \label{L:finest}
Let $\sigma \in \sym{n}$.  There is a unique finest $\sigma$-invariant partition of the interval $(0.n]$.
\end{lem}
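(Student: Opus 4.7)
The plan is to identify the finest $\sigma$-invariant partition explicitly as the set of all ``good cut points,'' and then verify that it is both $\sigma$-invariant and contains every other $\sigma$-invariant partition.

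First I would record the key observation that the collection of subsets of $(0,n]$ left setwise fixed by $\sigma$ is closed under finite unions, intersections, and complements (as $\sigma$ is a bijection on $(0,n]$). Consequently, a partition $\{a_0, \ldots, a_s\}$ of $(0,n]$ is $\sigma$-invariant if and only if every initial segment $(0, a_r]$ is $\sigma$-invariant: the forward direction writes $(0, a_r] = \bigcup_{i \leq r}(a_{i-1}, a_i]$, and the reverse direction writes $(a_{r-1}, a_r] = (0, a_r] \setminus (0, a_{r-1}]$.

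Next I would define
\[
A^{*} = \{\, a \in [0,n] \mid \sigma(0,a] = (0,a] \,\},
\]
which trivially contains $0$ and $n$. Listing its elements in increasing order $0 = a_0 < a_1 < \cdots < a_s = n$, each difference $(a_{r-1}, a_r] = (0, a_r] \setminus (0, a_{r-1}]$ is $\sigma$-invariant by the closure property above, so $A^{*}$ is a $\sigma$-invariant partition of $(0, n]$.

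Finally I would argue that $A^{*}$ is the finest such partition: if $\{b_0, \ldots, b_t\}$ is any $\sigma$-invariant partition, then by the equivalence in the first step every $b_j$ lies in $A^{*}$, so $\{b_0, \ldots, b_t\} \subseteq A^{*}$, meaning $A^{*}$ refines it. Uniqueness is then immediate, since any partition simultaneously finer than every $\sigma$-invariant partition must contain $A^{*}$ and also be contained in $A^{*}$ (being itself $\sigma$-invariant). I do not anticipate a real obstacle here; the only subtlety is the bookkeeping in the first step, which converts the condition on intervals into a condition on initial segments so that the obvious candidate $A^{*}$ can be written down in one line.
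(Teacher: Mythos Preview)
Your proof is correct. The paper does not actually give a proof of this lemma; it simply states ``We leave the proof of the Lemma to the reader,'' so there is no approach to compare against. Your argument via the set $A^{*}$ of good cut points, together with the reduction to initial segments, is exactly the sort of direct verification the authors had in mind.
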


We may thus speak of {\em the} finest $\sigma$-invariant partition of $(0,n]$, which is finer than every other $\sigma$-invariant partition of $(0,n]$.
We leave the proof of the Lemma to the reader.


The Sylow $p$-subgroups of $\sym{n}$ are direct products of iterated wreath products of cyclic groups of order $p$.  We proceed with the analysis of these building blocks.

From now on, we denote the distinguished $p$-cycle $(1,2,\dotsc,p)$  by $\pi$.

\begin{Def} \hfill
\begin{enumerate}
\item For each $i \in \mathbb{Z}_{\geq 0}$, let $R_i$ be the subgroup of $\sym{p^{i+1}}$ generated by $\{ \pi^{[p^j]} \mid j \in [0,i] \}$.
Then $R_i$ is a Sylow $p$-subgroup of $\sym{p^{i+1}}$.
 For convenience, let $\pi^{[p^{-1}]} = 1$, and $R_{-1} = 1$.
\item For each $i \in \mathbb{Z}_{\geq 0}$, let $B_i = R_{i-1}[1] \times R_{i-1}[2] \times \dotsb \times R_{i-1}[p]$.  For convenience, let $B_{-1} = \emptyset$.
\item And for each $s \in \mathbb{Z}^+$, let $H_s = \prod_{a=1}^s \left< \pi[a] \right>$.
\end{enumerate}
\end{Def}

\begin{note}
For each $i \in \mathbb{Z}_{\geq 0}$ we have $R_i = B_i \rtimes \langle \pi^{[p^i]} \rangle$,
and, as a group, $R_i$ is isomorphic
to $\smash{\underbrace{C_p \wr C_p \wr \dotsb \wr C_p}_{(i+1) \text{ times}}}$.
Also, $H_{p^i}$ is a subgroup of $B_i$, and it is normal in $R_i$.
\end{note}

The finest $g$-invariant partition of the interval $(0,p^{i+1}]$ for $g \in R_i$ has nice properties:

\begin{prop} \label{P:wreath}
Suppose that $g \in R_i$, and let $\{a_0 < a_1 < \dotsb < a_s \}$ be the finest $g$-invariant partition of the interval $(0,p^{i+1}]$.
Then
\begin{enumerate}
\item $a_j- a_{j-1}$ is a power of $p$, dividing $a_{j}$, for all $1 \leq j \leq s$;
\item  $g = \prod_{j=1}^s \gamma_j[a_j/p^{\lambda_j}]$
where $a_j - a_{j-1} = p^{\lambda_j}$,  with $\gamma_j \in R_{\lambda_j-1}\setminus B_{\lambda_j-1}$ for all $1 \leq j \leq s$.
\end{enumerate}
\end{prop}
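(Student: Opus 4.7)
The plan is to induct on $i \ge 0$. The base case $i = 0$ is direct from $R_0 = \langle\pi\rangle$: if $g = 1$, the singleton partition $\{0,1,\dots,p\}$ is the unique finest $g$-invariant one, with each $\gamma_j$ the unique element of $R_{-1} \setminus B_{-1} = \{1\}$; if $g = \pi^m$ with $m \ne 0$, then $g$ is a single $p$-cycle, so $\{0,p\}$ is the finest partition and $\gamma_1 = g \in R_0 \setminus B_0$.

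For the inductive step, I would write $g = b \cdot (\pi^{[p^i]})^m$ with $b \in B_i$ and $m \in [0, p-1]$, and split on whether $m = 0$.

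\emph{Case $m = 0$.} Then $g \in B_i$, so $g = \prod_{a=1}^p h_a[a]$ with $h_a \in R_{i-1}$. Each block $B_a := ((a-1)p^i, ap^i]$ is $g$-invariant, and $g$ acts on $B_a$ as $h_a$ does on $(0, p^i]$. Any $g$-invariant subinterval of $(0, p^{i+1}]$ can be cut at each block boundary into $g$-invariant pieces inside individual $B_a$'s, so the finest $g$-invariant partition of $(0, p^{i+1}]$ is the concatenation of the shifted finest $h_a$-invariant partitions furnished by the inductive hypothesis. The divisibility $p^{\mu_{a,j}} \mid (a-1)p^i + b_{a,j}$ follows from the inductive divisibility $p^{\mu_{a,j}} \mid b_{a,j}$ together with $\mu_{a,j} \le i$, and re-indexing the product $\prod_a \prod_j \gamma_{a,j}[((a-1)p^i + b_{a,j})/p^{\mu_{a,j}}]$ yields the claimed expression for $g$ in $\sym{p^{i+1}}$.

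\emph{Case $m \ne 0$.} I claim the finest partition is $\{0, p^{i+1}\}$ and correspondingly $g = g[1]$ with $g \in R_i \setminus B_i$. It suffices to show no $(0, a]$ with $0 < a < p^{i+1}$ is $g$-invariant, since otherwise the leftmost piece of any nontrivial finest partition would supply such an $a$. Suppose $(0, a]$ were $g$-invariant; then $ag \le a$ and $(a+1)g > a$. At the block level of size $p^i$, $g$ permutes $B_1, \dots, B_p$ as $\pi^m$, a single $p$-cycle since $\gcd(m, p) = 1$. If $a$ is interior to some $B_j$, then $a+1 \in B_j$ as well, so both $ag$ and $(a+1)g$ land in $B_{j\pi^m} \ne B_j$; being disjoint from $B_j$ and an interval of size $p^i$, $B_{j\pi^m}$ lies entirely on one side of $a$, contradicting $ag \le a < (a+1)g$. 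If $a = jp^i$ is a block boundary with $j \in [1, p-1]$, then $a \in B_j$ and $a+1 \in B_{j+1}$; the constraints force $j\pi^m \le j$ and $(j+1)\pi^m \ge j+1$. Using $\pi^m\colon x \mapsto ((x + m - 1) \bmod p) + 1$, these translate into $j \ge p - m + 1$ and $j \le p - m - 1$, which are incompatible.

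The main obstacle is the $m \ne 0$ case: one must rule out every proper $g$-invariant initial segment, which requires a delicate case split between interior and boundary positions for the cut point, with the boundary subcase resolved by a finite calculation on how $\pi^m$ shifts indices modulo $p$. The argument must go through uniformly in $p$ (including $p = 2$, where there are no strictly interior blocks to exploit).
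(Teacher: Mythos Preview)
Your proof is correct and follows the same inductive strategy as the paper: base case $i=0$, then split on whether $g\in B_i$, and in the case $g\in B_i$ concatenate the finest partitions coming from the $p$ blocks via the inductive hypothesis. The only difference is that the paper handles the case $g\notin B_i$ in one line, simply asserting that $s>1$ forces $g\in B_i$, whereas you supply a detailed case analysis; a shorter justification is that when $g\notin B_i$ it permutes the $p$ blocks of size $p^i$ as a $p$-cycle, so any nonempty $g$-invariant subset meets every block, and applying this to both $(0,a]$ and its complement $(a,p^{i+1}]$ immediately rules out $0<a<p^{i+1}$.
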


Note that the factor $\gamma_j[a_j/p^{\lambda_j}]$ appearing in Proposition \ref{P:wreath}(2) is just $\gamma_j$ with its support translated from $(0,p^{\lambda_j}]$ ($= (0,a_j-a_{j-1}]$) to $(a_{j-1},a_j]$.

\begin{proof}
We prove the first two statements by induction on $i$.
When $i=0$, either $g = 1$,
in which case $s = p$ and $a_j = j$ for all $0 \leq j \leq s$, or else $g = \pi^t$
for some $t \in \mathbb{Z}_p^*$, in which case $s =1$ and so $a_1 = p$.
It can easily be checked that both statements hold in either case.

Assume now $i>0$.
The statements are trivial when $s=1$.  When $s>1$, we have $g \in B_i$.
Thus, $g = \prod_{j=1}^p g_j[j]$ for some $g_1,g_2,\dotsc, g_p \in R_{i-1}$, so that $\{ 0, p^i, 2p^i,\dotsc, p^{i+1}\}$ is a $g$-invariant partition of $(0,p^{i+1}]$.
Hence by Lemma \ref{L:finest}, there exist $j_1, \dotsc, j_p \in  [1,s]$ such that $a_{j_r}= rp^i$ for
$r \in [1,p]$.
Now the partition $\{a_{j_{r-1}}, \dotsc, a_{j_r}\}$ of the interval $((r-1)p^i, rp^i]$ must be the finest $g_r$-invariant partition of $(0,p^i]$ linearly translated by $(r-1)p^i$.  Induction hypothesis applied to $g_r$ then yields for each $j \in (j_{r-1},j_r]$, $a_j - a_{j-1}$ is a power of $p$, say $p^{\lambda_j}$, dividing $a_j - (r-1)p^i$.  Since $p^{\lambda_j} = a_j - a_{j-1} \leq p^i$, we see that $\lambda_j \leq i$, and so $p^{\lambda^j}$ divides $(a_j - (r-1)p^i) + (r-1)p^i = a_j$.  Also by induction, $g_r$ is a product of $\gamma_j \in R_{\lambda_j-1} \setminus B_{\lambda_j-1}$ whose support is translated from $(0,p^{\lambda_j}]$ to $(a_{j-1} - (r-1)p^i, a_j - (r-1)p^i]$.  Thus $g_r[r]$ is a product of $\gamma_{j}$ whose support is translated from $(0,p^{\lambda_j}]$ to $(a_{j-1},a_j]$, and the proof is complete.
\end{proof}

For $g \in R_i$ of order $p$ and is fixed-point-free as an element of $\sym{p^{i+1}}$, the $\lambda_j$'s appearing in Proposition \ref{P:wreath}(2) are all positive.  Furthermore, each $\gamma_j$ is conjugate to a unique power of $\pi^{[p^i]}$; we present below a proof of this, in a more general setting.

\begin{prop} \label{P:conj}
Let $G$ be a group of the
form $G = R\wr \langle y \rangle$ where $y$ has order $p$, with
base group $B$.  Let $t \in \mathbb{Z}_p^*$.  Then the conjugacy class of $y^t$ contains precisely the elements of $G$ having order $p$ and lying in the coset $By^t$.
\end{prop}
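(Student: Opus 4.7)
The plan is to prove the two containments separately. The easy direction is that every conjugate of $y^t$ has order $p$, and lies in the coset $By^t$ since $B$ is normal in $G$ with $G/B\cong\langle y\rangle$ abelian.

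For the converse I would compute the conjugacy class of $y^t$ in $G$ explicitly. Let $\alpha\in\func{Aut}(B)$ be conjugation by $y$. A direct calculation gives
\[
(cy^s)^{-1}\, y^t\, (cy^s) \;=\; \alpha^{-s}\bigl(c^{-1}\alpha^t(c)\bigr)\, y^t
\]
for $c\in B$ and $s\in\mathbb{Z}/p$, and substituting $c=\alpha^s(d)$ shows that the set of conjugates of $y^t$ equals $\{d^{-1}\alpha^t(d)\, y^t : d\in B\}$. Separately, since $y^{pt}=1$, a telescoping expansion gives $(by^t)^p=b\,\alpha^t(b)\cdots\alpha^{(p-1)t}(b)=:N(b)$, so $by^t$ has order $p$ if and only if $N(b)=1$. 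Thus the proposition reduces to the claim: if $b\in B$ satisfies $N(b)=1$, then $b=d^{-1}\alpha^t(d)$ for some $d\in B$.

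The key point is that because $t\in\mathbb{Z}_p^*$, the automorphism $\alpha^t$ permutes the $p$ direct factors of $B=R^p$ as a single $p$-cycle. After relabeling the factors along this cycle, I may assume $\alpha^t$ is the standard cyclic shift $(r_1,\dotsc,r_p)\mapsto(r_p,r_1,\dotsc,r_{p-1})$. Writing $b=(b_1,\dotsc,b_p)$ and $d=(d_1,\dotsc,d_p)$, the equation $d^{-1}\alpha^t(d)=b$ becomes the cyclic system $d_{i-1}=d_i b_i$ (indices mod $p$). Choosing $d_1$ arbitrarily and solving recursively yields $d_p,d_{p-1},\dotsc,d_2$; the condition for the recursion to close up consistently at $d_1$ is exactly that the first coordinate of $N(b)$ is trivial, which is implied by $N(b)=1$. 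By construction the resulting $d$ then satisfies the equation in every coordinate.

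The main obstacle here is really just bookkeeping --- keeping index conventions straight in the two explicit calculations, and being careful that the "consistency" condition at the close of the recursion matches exactly one coordinate of the norm. Conceptually, the statement asserts the vanishing $H^1(\langle y^t\rangle,B)=1$ for the action of $\langle y^t\rangle$ on the induced module $B=R^p$, but the direct construction above sidesteps any appeal to non-abelian cohomology.
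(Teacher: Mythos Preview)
Your proof is correct. Both your argument and the paper's compute $(by^t)^p$ coordinatewise and reduce the question to a ``Hilbert 90'' style statement: every $b$ in the kernel of the norm $N$ lies in the image of $d\mapsto d^{-1}\alpha^t(d)$. The difference is in how this last step is established. The paper first reduces to $t=1$ and then argues by \emph{counting}: the order-$p$ condition forces $r_p=(r_1\cdots r_{p-1})^{-1}$, so $|\Gamma|\le |R|^{p-1}$, while conjugation by $B'=\prod_{i=1}^{p-1}R[i]$ already produces $|R|^{p-1}$ distinct elements of $\Gamma$; hence equality. Your argument instead \emph{constructs} the conjugating element $d$ directly by solving the cyclic system $d_{i-1}=d_ib_i$, and checks that the closing condition is exactly one coordinate of $N(b)=1$. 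Your route is a bit more robust (it does not use finiteness of $R$), while the paper's counting argument simultaneously delivers the \emph{uniqueness} of the conjugator in $B'$, which is exactly what is needed for Corollary~\ref{C:conj}. Your construction gives this uniqueness too---fixing $d_1=1$ pins down $d$ uniquely, and this corresponds to taking $d$ in (a relabelled copy of) $B'$---but you may want to make that explicit, since it is used immediately afterwards.
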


Thus, the elements of $G \setminus B$ having order $p$ are just the various conjugates of $y^t$ for $t \in\mathbb{Z}_p^*$.

%

\begin{proof}
It suffices to prove the proposition for $t = 1$ since $y^t$ also generates the group $\left< y \right>$.  Let $C$ be the conjugacy class of $y$, and let $\Gamma = \{ g \in By \mid g$ has order $p \}$.  It is easy to see that $C \subseteq \Gamma$.  For the converse, we show that every element of $\Gamma$ is conjugate to $y$ by a unique element of $B'$, where $B'$ is the direct product of $p-1$ copies of $R$, say $B' = R \times \dotsb \times R \times 1 \subseteq B$.
Let $g = by \in \Gamma$, where $b \in B$.
Then $g^p = b(^yb)(^{y^2}b) \cdots  (^{y^{p-1}}b)$ (where
$^xb= xbx^{-1}$). If $b = (r_1, \ldots, r_p)$ then the coordinates of $g^p$ are the cyclic permutations of
$r_1r_2\ldots r_p$.
Hence $g^p=1$ if and only if $r_p = (r_1r_2\ldots r_{p-1})^{-1}$.
This shows that given $r_1, \ldots, r_{p-1} \in R$ there is a unique such $g$ of order $p$. Hence $|\Gamma| \leq |R|^{p-1}$.
The set $A:= \{ (b')^{-1}y b': b'\in B'\}$ is contained in $\Gamma$, and has size $|B'| = |R|^{p-1}$ since the centraliser of $y$ in $B'$ is trivial. It follows that $A = \Gamma$, and $g = (b')^{-1}y b'$ for a unique $b' \in B'$, and the proof is complete.
\end{proof}

\begin{cor}[of proof] \label{C:conj}
Every element of $R_i \setminus B_i$ having order $p$ can be uniquely expressed as $((\pi^{[p^i]})^t)^b$ with $b \in \prod_{j=1}^{p-1} R_{i-1}[j]$ and $t \in \mathbb{Z}_p^*$.
\end{cor}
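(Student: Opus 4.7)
The plan is to view $R_i = B_i \rtimes \langle \pi^{[p^i]}\rangle$ as the wreath product $R_{i-1} \wr \langle \pi^{[p^i]}\rangle$ and apply Proposition \ref{P:conj} with $R = R_{i-1}$ and $y = \pi^{[p^i]}$. The proposition then describes the order-$p$ elements of $R_i \setminus B_i$ as the disjoint union, over $t \in \mathbb{Z}_p^*$, of the sets $\Gamma_t := \{g \in B_i y^t : g^p = 1\}$, each of which is the full $R_i$-conjugacy class of $y^t$. Because the $\Gamma_t$ are disjoint, the exponent $t$ of any order-$p$ element is uniquely determined by the coset $B_i y^t$ in which the element lies, so uniqueness of $t$ is immediate.

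For the parameter $b$, write $B' := \prod_{j=1}^{p-1} R_{i-1}[j] \subseteq B_i$. The proof of Proposition \ref{P:conj} already yields $|\Gamma_t| = |R_{i-1}|^{p-1} = |B'|$, so to show that the map $B' \to \Gamma_t$, $b \mapsto (y^t)^b$, is a bijection it is enough to verify injectivity, i.e.\ $B' \cap C_{R_i}(y^t) = 1$. Since $\langle y^t\rangle = \langle y\rangle$ one has $C_{R_i}(y^t) = C_{R_i}(y)$, and as $B' \subseteq B_i$, intersecting with $B_i$ gives $B' \cap C_{R_i}(y) = B' \cap C_{B_i}(y)$.

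It then remains to compute $C_{B_i}(y)$. Using Lemma \ref{L:ready}(1), $\tau[r]^{\pi^{[p^i]}} = \tau[r\pi]$, so conjugation by $y$ cyclically permutes the factors $R_{i-1}[1], \dotsc, R_{i-1}[p]$ according to $\pi = (1,2,\dotsc,p)$. A general element $\prod_{j=1}^{p} r_j[j]$ of $B_i$ therefore centralises $y$ iff $r_1 = \cdots = r_p$, so $C_{B_i}(y) = \Delta_p R_{i-1}$. An element of $B'$ has trivial $R_{i-1}[p]$-component, so if it lies in $\Delta_p R_{i-1}$ all its components must be trivial; hence $B' \cap C_{B_i}(y) = 1$, completing the argument. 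No step is genuinely delicate: the corollary is essentially a translation of the proof of Proposition \ref{P:conj} into the specific notation of the $R_{i-1}[j]$'s, and the only calculation of substance is the centraliser identification via Lemma \ref{L:ready}.
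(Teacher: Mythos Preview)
Your argument is correct and is exactly the unpacking that the paper intends: the corollary carries no separate proof in the paper, being marked ``Corollary [of proof]'' of Proposition~\ref{P:conj}, and your write-up simply spells out that specialisation. The only thing you add beyond the paper's proof is the explicit verification, via Lemma~\ref{L:ready}(1), that $C_{B_i}(\pi^{[p^i]}) = \Delta_p R_{i-1}$ and hence $B' \cap C_{B_i}(\pi^{[p^i]}) = 1$; the paper's proof of Proposition~\ref{P:conj} simply asserts that the centraliser of $y$ in $B'$ is trivial without further comment.
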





Take $g\in R_i$.  We have seen in
Proposition \ref{P:wreath}(1) that if $\{a_0,\dotsc,a_s\}$ is finest $g$-invariant partition of $(0,p^{i+1}]$, then $a_j - a_{j-1}$ are $p$-powers, dividing $a_j$.  Clearly, these $p$-powers, or more simply their exponents, completely determines the finest $g$-invariant partition.
This suggests the following terminology.

\begin{Def}\label{D:pcomp}
A {\em $p$-composition $\lambda = (\lambda_1,\lambda_2,\dotsc,\lambda_s)$} is a finite sequence of non-negative integers such that for each $j \in [1, s]$, the partial sum $\sum_{i=1}^j p^{\lambda_i}$ is divisible by $p^{\lambda_j}$.

Let $\lambda= (\lambda_1,\lambda_2,\dotsc,\lambda_s)$ be a $p$-composition.
For each $j \in [1,s]$, we will denote these  partial sums in the following
 by
$$\Sigma^{\lambda}_j: = \sum_{i=1}^{j} p^{\lambda_i}.
$$
If $\sum_{i=1}^s p^{\lambda_i} = r$, we say that $\lambda$ is a {\em $p$-composition of $r$}.
\end{Def}

Clearly, if $\lambda = (\lambda_1,\lambda_2,\dotsc,\lambda_s)$ is a $p$-composition, then $\{ 0, \Sigma^{\lambda}_1, \Sigma^{\lambda}_2,\dotsc, \Sigma^{\lambda}_s \}$ is a partition of the interval $(0,\Sigma^{\lambda}_s]$.


\begin{prop}\label{P:p-comp-1}
Let $g \in R_i$, and let $\lambda = (\lambda_1, \lambda_2, \ldots, \lambda_s)$, where $\lambda_i$'s are as in Proposition \ref{P:wreath}(2).  Then $\lambda$ is a $p$-composition of $p^{i+1}$.

Conversely, every $p$-composition of $p^{i+1}$ arises in this way.  More precisely, if $\mu = (\mu_1,\dotsc, \mu_t)$ is a $p$-composition of $p^{i+1}$, then $\{ 0, \Sigma^{\mu}_1,\dotsc, \Sigma^{\mu}_t \}$ is the finest $g$-invariant partition of the interval $(0,p^{i+1}]$, where
$$g := \prod_{r=1}^t \pi^{[p^{\mu_r-1}]}[\Sigma^{\mu}_r/p^{\mu_r}] \in R_i.$$
\end{prop}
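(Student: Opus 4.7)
The plan is to dispatch the forward direction by unwinding definitions, and to establish the converse by checking that the given $g$ (i) lies in $R_i$, (ii) has the proposed partition as a $g$-invariant partition, and (iii) admits no strictly finer $g$-invariant partition. The forward direction is immediate: Proposition \ref{P:wreath}(1) says $a_j - a_{j-1} = p^{\lambda_j}$ divides $a_j = \Sigma^{\lambda}_j$, which is precisely the defining condition of a $p$-composition, and $a_s = p^{i+1}$ means $\lambda$ is a $p$-composition of $p^{i+1}$.

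For the converse, I would first show that each factor $\pi^{[p^{\mu_r-1}]}[\Sigma^{\mu}_r/p^{\mu_r}]$ lies in $R_i$ via Lemma \ref{L:ready}(1). Since $p^{\mu_r}$ divides $\Sigma^{\mu}_r \leq p^{i+1}$, we have $\mu_r \leq i+1$, so $\pi^{[p^{\mu_r-1}]}$ is among the generators of $R_i$; in the bracket notation, $\pi^{[p^{\mu_r-1}]} = \pi^{[p^{\mu_r-1}]}[1]$. To move this factor to the $j_r$-th block where $j_r = \Sigma^{\mu}_r/p^{\mu_r}$, pick $\sigma \in R_{i-\mu_r}$ with $(1)\sigma = j_r$ (possible by transitivity of $R_{i-\mu_r}$ on $[1, p^{i+1-\mu_r}]$). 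A short direct calculation gives $(\pi^{[p^j]})^{[p^{\mu_r}]} = \pi^{[p^{j+\mu_r}]}$, so $\sigma^{[p^{\mu_r}]} \in R_i$; Lemma \ref{L:ready}(1) then yields $\pi^{[p^{\mu_r-1}]}[j_r] = (\pi^{[p^{\mu_r-1}]}[1])^{\sigma^{[p^{\mu_r}]}} \in R_i$, and hence $g \in R_i$.

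The proposed partition is $g$-invariant because each factor acts only on $(\Sigma^{\mu}_{r-1}, \Sigma^{\mu}_r]$. For finestness, $g$ restricted to $(\Sigma^{\mu}_{r-1}, \Sigma^{\mu}_r]$ is (up to translation) $\pi^{[p^{\mu_r-1}]}$, whose orbits on $(0, p^{\mu_r}]$ are the cycles $\{j, p^{\mu_r-1}+j, \ldots, (p-1)p^{\mu_r-1}+j\}$ for $j \in [1, p^{\mu_r-1}]$. A strictly finer $g$-invariant partition would yield a proper initial subinterval $(0, c]$ that is a union of these cycles; but such a subinterval contains $1, 2, \ldots, p^{\mu_r-1}$ and hence the orbit of $p^{\mu_r-1}$, whose maximum is $p^{\mu_r}$, forcing $c = p^{\mu_r}$. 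By Lemma \ref{L:finest}, the given partition is \emph{the} finest $g$-invariant partition.

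The main obstacle I expect is verifying $g \in R_i$: structurally this is natural, but one has to realise each factor explicitly as a conjugate of a generator of $R_i$ by an element itself in $R_i$, and this is where Lemma \ref{L:ready} and the compatibility $(\pi^{[p^j]})^{[p^{\mu_r}]} = \pi^{[p^{j+\mu_r}]}$ do the real work. The remaining ingredients reduce to straightforward bookkeeping with the cycle structure of $\pi^{[p^{\mu_r-1}]}$.
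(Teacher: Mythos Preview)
Your proof is correct and follows the same approach as the paper, which simply records that the first assertion is immediate from Proposition~\ref{P:wreath} and that the second ``can be verified easily.'' You have supplied the details the paper omits; in particular your conjugation argument via Lemma~\ref{L:ready}(1) and the identity $(\pi^{[p^j]})^{[p^{\mu_r}]} = \pi^{[p^{j+\mu_r}]}$ is a clean way to see that each factor lies in $R_i$, and your orbit analysis for finestness is exactly what is needed.
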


\begin{proof}
The first assertion follows immediately from Proposition \ref{P:wreath}, while the second can be verified easily.
\end{proof}

An important $p$-composition is defined as follows.

\begin{Def}\label{D:padic}
Let $r \in \mathbb{Z}^+$, and let $r = \sum_{i=0}^t b_i p^{i}$ be its $p$-adic expansion, with $b_t\neq 0$.
The $p$-composition $(t^{b_t}, (t-1)^{b_{t-1}}, \dotsc, 0^{b_0})$
with $b_t$ parts equal to $t$, $b_{t-1}$ parts equal to $t-1$, and so on,
is called the {\em $p$-adic $p$-composition of $r$}.
\end{Def}

\begin{lem}\label{L:refinement} \hfill
\begin{enumerate}
\item Let $\gamma$ be a $p$-composition of $ap^m$ for some $a \in \mathbb{Z}^+$,
 and let $\delta$ be a $p$-composition of $p^j$ with $j \leq m$. Then
the concatenation $(\gamma, \delta)$ is a $p$-composition.

\item If $\lambda =
(\lambda_1, \ldots, \lambda_s)$ is a $p$-composition, and $\mu^{(i)}$ is a $p$-composition of $p^{\lambda_i}$ for all $1\leq i\leq s$,
then the concatenation $\mu := (\mu^{(1)}, \mu^{(2)}, \ldots, \mu^{(s)})$ is
a $p$-composition.
\end{enumerate}
\end{lem}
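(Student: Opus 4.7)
My plan is to verify each claim directly from the defining divisibility condition for a $p$-composition, handling (1) by hand and then bootstrapping (2) from (1) by induction on $s$.

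For (1), I would write $\gamma = (\gamma_1, \dotsc, \gamma_r)$ and $\delta = (\delta_1, \dotsc, \delta_u)$, so that the partial sums of the concatenation are $\Sigma^{\gamma}_k$ for $k \in [1,r]$ together with $ap^m + \Sigma^{\delta}_\ell$ for $\ell \in [1,u]$. The first class already satisfies the divisibility requirement because $\gamma$ itself is a $p$-composition. For the second class, I must check that $p^{\delta_\ell} \mid ap^m + \Sigma^{\delta}_\ell$. The key observation is that $p^{\delta_\ell} \leq \Sigma^{\delta}_\ell \leq p^j$, so $\delta_\ell \leq j \leq m$; hence $p^{\delta_\ell}$ divides $p^m$, and therefore $ap^m$. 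Combined with $p^{\delta_\ell} \mid \Sigma^{\delta}_\ell$, inherited from $\delta$ being a $p$-composition, this gives the required divisibility.

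For (2), I would induct on $s$, with $s=1$ immediate. For the inductive step, set $\nu := (\mu^{(1)}, \dotsc, \mu^{(s-1)})$; the inductive hypothesis makes $\nu$ a $p$-composition, and its total sum is $N := \Sigma^{\lambda}_{s-1}$. Since $\lambda$ is a $p$-composition, $p^{\lambda_s}$ divides $\Sigma^{\lambda}_s = N + p^{\lambda_s}$, and hence $p^{\lambda_s} \mid N$. Writing $N = a p^{\lambda_s}$, the hypothesis of (1) is met with $\gamma = \nu$, $m = \lambda_s$, $\delta = \mu^{(s)}$ and $j = \lambda_s$, so $(\nu, \mu^{(s)}) = \mu$ is a $p$-composition, completing the induction.

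The argument is a routine unwinding of the definition and I do not anticipate a serious obstacle. The one piece of bookkeeping worth isolating is the observation, used in the inductive step of (2), that the $p$-composition property of $\lambda$ forces $p^{\lambda_s}$ to divide the \emph{previous} partial sum $\Sigma^{\lambda}_{s-1}$, not merely $\Sigma^{\lambda}_s$; this is exactly what licenses the application of (1) to glue $\mu^{(s)}$ onto the tail of $\nu$.
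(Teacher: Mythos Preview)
Your proof is correct and follows essentially the same approach as the paper: for (1) you bound $\delta_\ell \leq j \leq m$ to get $p^{\delta_\ell}\mid ap^m$ and combine with $p^{\delta_\ell}\mid \Sigma^{\delta}_\ell$, and for (2) you induct on $s$ and reduce to (1). Your write-up is in fact more explicit than the paper's, which leaves the inductive step of (2) as a one-line remark; in particular, your isolation of the fact that $p^{\lambda_s}\mid \Sigma^{\lambda}_{s-1}$ is exactly the point that makes the appeal to (1) go through.
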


\begin{proof}
For part (1), if $\delta_i$ is a part of $\delta$ then $\delta_i\leq j\leq m$
and therefore $p^{\delta_i}$ divides $ap^m$. Since $p^{\delta_i}$ also divides $\Sigma_i^{\delta}$ (as $\delta$ is a $p$-composition), it divides
$ap^m + \Sigma_i^{\delta}$. Thus
$(\gamma, \delta)$ is a $p$-composition.
Part (2) follows by induction and part (1).
\end{proof}

\begin{Def}\label{D:refinement} Suppose that $\lambda, \mu$ are as in Lemma
\ref{L:refinement}(2).  Then
we say that $\mu$ is a {\it refinement} of $\lambda$.
\end{Def}


\begin{cor}\label{C:refinement} Let $\lambda$ be a $p$-composition of $p^{m}$ with more than
one part. Then $\lambda$ is a refinement of the $p$-composition $((m-1)^p)$.
\end{cor}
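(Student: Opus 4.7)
The plan is to show that the partial sums of $\lambda$ contain all the values $ap^{m-1}$ for $a \in [1, p-1]$, since these (together with $p^m$) are precisely the partial sums of $((m-1)^p)$. Granted this, I split $\lambda$ into $p$ consecutive sub-sequences between these break-points and check that each is itself a $p$-composition of $p^{m-1}$; by Definition \ref{D:refinement}, this will exhibit $\lambda$ as a refinement of $((m-1)^p)$. The verification of each piece being a $p$-composition is routine: after subtracting the constant multiple of $p^{m-1}$ from the relevant partial sums, divisibility by $p^{\lambda_l}$ is preserved, because (as I show in the next step) every part of $\lambda$ is at most $m-1$.

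A preliminary observation will establish this bound on the parts: if some $\lambda_r = m$, then $p^m \mid \Sigma^\lambda_r \leq p^m$ forces $\Sigma^\lambda_r = p^m$, and then $\Sigma^\lambda_{r-1} = 0$ and $r$ is the last index, forcing $r = s = 1$, contradicting the hypothesis that $\lambda$ has more than one part.

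The heart of the proof is the appearance of each $ap^{m-1}$ as a partial sum. I fix $a \in [1,p-1]$, let $j$ be maximal with $\Sigma^\lambda_j \leq ap^{m-1}$, and assume for contradiction that $\Sigma^\lambda_j < ap^{m-1}$. Writing the $p$-adic expansion $\Sigma^\lambda_j = \sum_{t=0}^{m-1} c_t p^t$, the top coefficient satisfies $c_{m-1} < a$. The maximality of $j$ gives $p^{\lambda_{j+1}} > ap^{m-1} - \Sigma^\lambda_j$, and from the $p$-composition condition $p^{\lambda_{j+1}} \mid \Sigma^\lambda_j$. If $c_{m-1} \leq a - 2$, the gap $ap^{m-1} - \Sigma^\lambda_j$ exceeds $p^{m-1}$, forcing $\lambda_{j+1} \geq m$ and contradicting the preliminary step. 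Hence $c_{m-1} = a-1$, and I write $\Sigma^\lambda_j = (a-1)p^{m-1} + e$ with $0 \leq e < p^{m-1}$; since $\lambda_{j+1} \leq m-1$, we get $p^{\lambda_{j+1}} \mid e$, so $e = k p^{\lambda_{j+1}}$ for some integer $k \geq 0$. The inequality $p^{\lambda_{j+1}} > p^{m-1} - e$ rearranges to $(k+1)p^{\lambda_{j+1}} > p^{m-1}$, while $e < p^{m-1}$ gives $k p^{\lambda_{j+1}} < p^{m-1}$; these two inequalities are inconsistent, producing the desired contradiction.

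The main obstacle is the $p$-adic book-keeping in this last step; the trick that makes it clean is isolating the top $p$-adic digit of $\Sigma^\lambda_j$ in order to reduce to a single pair of incompatible size-and-divisibility inequalities. Everything after that (assembling the sub-sequences into a refinement) is bookkeeping via Lemma \ref{L:refinement}(2).
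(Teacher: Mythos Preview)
Your argument is correct and self-contained: the bound $\lambda_r\le m-1$, the maximal-$j$ contradiction via $p$-adic digits, and the verification that each sub-sequence between the break-points $ap^{m-1}$ is a $p$-composition of $p^{m-1}$ all go through as written. The final contradiction in Case~2 is cleanly justified by noting that $\lambda_{j+1}\le m-1$ forces $p^{\lambda_{j+1}}\mid p^{m-1}$, so $p^{m-1}/p^{\lambda_{j+1}}$ is an integer strictly between $k$ and $k+1$.

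The paper takes a different, shorter route: it invokes Proposition~\ref{P:p-comp-1} to realise $\lambda$ as the $p$-composition attached to the finest $g$-invariant partition of $(0,p^m]$ for some $g\in R_{m-1}$. Having more than one part forces $g\in B_{m-1}$, whence $\{0,p^{m-1},2p^{m-1},\dotsc,p^m\}$ is automatically $g$-invariant; the refinement statement then drops out of Lemma~\ref{L:finest}. So the paper trades your explicit digit-chasing for the group-theoretic machinery already in place, while your approach is purely combinatorial and does not depend on Propositions~\ref{P:wreath} or~\ref{P:p-comp-1}. Either is fine; yours has the virtue of being reusable in contexts where the $R_i$ are not to hand.
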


\begin{proof}
By Proposition \ref{P:p-comp-1}, $\lambda$ arises from the finest $g$-invariant partition $\mathcal{P}$ of the interval $(0,p^m]$ for some $g \in R_{m-1}$.  Since $\lambda$ has more than one part, so does $\mathcal{P}$.  Thus $g \in B_{m-1}$, so that $\{0,p^{m-1}, 2p^{m-1}, \dotsc, p^m\}$ is a $g$-invariant partition of $(0,p^m]$.  Hence $\mathcal{P}$ is finer than $\{0,p^{m-1}, 2p^{m-1}, \dotsc, p^m\}$, and in turn, $\lambda$ is a refinement of $((m-1)^p)$.
\end{proof}

Recall that $R_i$ is an explicit Sylow $p$-subgroup of $\sym{p^{i+1}}$,
with support $(0,p^{i+1}]$.  We will now fix an explicit Sylow $p$-subgroup
of $\sym{pk}$.  Let $\kappa = (\kappa_1,\dotsc, \kappa_l)$ be the $p$-adic $p$-composition of $k$.  Then $(0,p\Sigma^{\kappa}_1,\dotsc, p\Sigma^{\kappa}_l]$ is a partition of the interval $(0,pk]$.  We choose our Sylow $p$-subgroup to be the product of $R_{\kappa_i}$ whose support is translated from $(0,p^{\kappa_i+1}]$ to $(p\Sigma^{\kappa}_{i-1}, p\Sigma^{\kappa}_{i}]$.



The following gives the precise description of this choice in general.

\begin{Def} \label{D:P}
Let $\kappa = (\kappa_1,\kappa_2,\dotsc,\kappa_l)$ be the $p$-adic $p$-composition of $k$, and let $P_k$ be the Sylow $p$-subgroup of $\sym{pk}$ chosen as follows:
$$
P_k = P_k(1) \times P_k(2) \times \dotsb \times P_k(l),$$
where $P_k(i) = R_{\kappa_i}[\Sigma^{\kappa}_i/p^{\kappa_i}]$ for all $i$.
That is, the $i$-th factor $P_k(i)$ is a copy of $R_{\kappa_i}$ with support being translated from $(0,p^{\kappa_i+1}]$ to $(p\Sigma^{\kappa}_{i-1}, p\Sigma^{\kappa}_i]$.
\end{Def}

\begin{note}
Note that $\pi^{[p^i]}[j] \in P_k$ if and only if $jp^i \leq k$.  In fact,
$$
P_k = \langle \pi^{[p^i]}[j] \mid jp^i \leq k \rangle.
$$
This may be used as an alternative definition of $P_k$.
\end{note}

\begin{notation}
In what follows, we will frequently have expressions of the form
$\prod_{i=1}^l x_i[\Sigma^{\kappa}_i/p^{\kappa_i}]$ and $\prod_{i=1}^l A_i[\Sigma^{\kappa}_i/p^{\kappa_i}]$ where for each $i \in [1,l]$, $x_i$ and $A_i$ are respectively an element and a subset of $\sym{p^{\kappa_i+1}}$. Most of the time, the details of the shifts do not play a role.  We will therefore use the
shorthand notations
$$\prod_{\kappa} x_i \text{\quad and \quad} \prod_{\kappa} A_i
$$
to denote these expressions.

Similarly if $\lambda = (\lambda_1, \ldots, \lambda_s)$
is a $p$-composition then we write
$$\prod_{\lambda} x_i = \prod_{i=1}^s x_i[\Sigma_i^{\lambda}/p^{\lambda_i}]
$$
(where $x_i \in \sym{p^{\lambda_i+1}}$).
\end{notation}

\begin{lem}\label{L:Pn}
Let $P_k$ be the Sylow $p$-subgroup of $\sym{kp}$ defined above. Then we have $P_k \subseteq P_{k+1}$.
\end{lem}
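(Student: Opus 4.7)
The plan is to invoke the alternative description of $P_k$ recorded in the note just above, namely
$$P_k = \langle \pi^{[p^i]}[j] \mid jp^i \leq k \rangle,$$
and observe that the generating set grows monotonically in $k$. Concretely, if $\pi^{[p^i]}[j]$ is a generator of $P_k$ then $jp^i \leq k < k+1$, so the same element appears on the generator list of $P_{k+1}$. Under the identification $\sym{pk} \subseteq \sym{p(k+1)}$ fixing $(pk, p(k+1)]$ pointwise, each generator of $P_k$ therefore lies in $P_{k+1}$, whence $P_k \subseteq P_{k+1}$.

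The reason for going through the generator description, rather than directly comparing the factorisations $P_k = \prod_\kappa P_k(i)$ and $P_{k+1} = \prod_{\kappa'} P_{k+1}(i)$, is that the $p$-adic $p$-compositions $\kappa$ of $k$ and $\kappa'$ of $k+1$ can be very different: carrying in the $p$-adic expansion (for instance at $k = p^s - 1$) completely reshapes the block decomposition, so no factor-by-factor comparison is available. The generator description is insensitive to this reshaping, which is precisely what makes the lemma clean.

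The only step requiring genuine verification is the alternative description of $P_k$ itself, which the note asserts without proof. This is a straightforward factor-by-factor check: since $R_{\kappa_i} = \langle \pi^{[p^j]} \mid 0 \leq j \leq \kappa_i \rangle$ by its recursive wreath-product definition, translating to the $i$-th factor $P_k(i) = R_{\kappa_i}[\Sigma^{\kappa}_i/p^{\kappa_i}]$ gives generators of the form $\pi^{[p^j]}[t]$ whose support $((t-1)p^{j+1}, tp^{j+1}]$ fits inside $(p\Sigma^{\kappa}_{i-1}, p\Sigma^{\kappa}_i]$; taking the union over $i$ and using $\sum_i p^{\kappa_i} = k$ yields exactly the condition $jp^i \leq k$. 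I anticipate no real obstacle: the lemma merely packages the observation that the generating set of $P_k$ is monotone in $k$.
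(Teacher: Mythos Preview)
Your proposal is correct and matches the paper's own proof, which consists of the single sentence ``This follows from the fact that $P_k = \langle \pi^{[p^i]}[j] \mid jp^i \leq k \rangle$.'' You even go further than the paper by sketching why the generator description holds and by explaining why the direct factor decompositions are not easily comparable.
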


\begin{proof}
This follows from the fact that
$
P_k = \langle \pi^{[p^i]}[j] \mid jp^i \leq k \rangle.
$
\end{proof}

For each $g \in P_k$, we have $g = \prod_{\kappa} \rho_i$ for some $\rho_i \in R_{\kappa_i}$ for all $i \in [1,l]$.  By Proposition \ref{P:wreath}, each $\rho_i$ is associated with a $p$-composition $\lambda^{(i)}$ of $p^{\kappa_i+1}$ arising from the finest $\rho_i$-invariant partition of the interval $(0,p^{\kappa_i+1}]$.  The concatenation $\lambda$ of these $p$-compositions is again a $p$-composition, of $pk$, by Lemma \ref{L:refinement} (note that $(\kappa_1+1,\kappa_2 + 1,\dotsc, \kappa_l+1)$ is the $p$-adic $p$-composition of $pk$).

If in addition $g$ is fixed-point-free as an element of $\sym{pk}$, then the parts in each $\lambda^{(i)}$ are positive.  Subtracting each part in $\lambda^{(i)}$ by 1 produces a $p$-composition $\bar{\lambda}^{(i)}$ of $p^{\kappa_i}$, and the concatenation $\bar{\lambda}$ of these $\bar{\lambda}^{(i)}$ is a $p$-composition of $k$.  We call $\bar{\lambda}$ the $p$-composition of $k$ associated to the fixed-point-free element $g$ of $P_k$.

The following shows that every $p$-composition of $k$ arises in this way.  In fact, each is associated to some fixed-point-free element $g$ of $P_k$ of order $p$.

\begin{prop} \label{P:p-comp-2}
Let $k \in \mathbb{Z}^+$, and let $\lambda = (\lambda_1,\dotsc, \lambda_s)$ be a $p$-composition of
$k$. Then $\lambda$ is associated to some fixed-point-free element $g \in P_k$ of order $p$.
\end{prop}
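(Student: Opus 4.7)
The plan is to construct $g$ blockwise using the $p$-adic $p$-composition $\kappa = (\kappa_1, \dotsc, \kappa_l)$ of $k$. The central combinatorial step is to show that $\lambda$ refines $\kappa$ in the sense of Definition \ref{D:refinement}: namely, that $\lambda$ splits as a concatenation $(\bar\lambda^{(1)}, \dotsc, \bar\lambda^{(l)})$ in which each $\bar\lambda^{(i)}$ is a $p$-composition of $p^{\kappa_i}$. Granted this, for each $i$ I would apply Proposition \ref{P:p-comp-1} to the $p$-composition of $p^{\kappa_i + 1}$ obtained by adding $1$ to every part of $\bar\lambda^{(i)}$; this produces an element $\rho_i \in R_{\kappa_i}$, and then $g := \prod_\kappa \rho_i$ is an element of $P_k$.

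The main obstacle is the refinement claim, which reduces to showing that each $\Sigma^\kappa_i$ occurs as a partial sum $\Sigma^\lambda_j$. I would prove this by induction on $l$, the key case being the location of $\Sigma^\kappa_1 = p^{\kappa_1}$. Let $j$ be maximal with $\Sigma^\lambda_j \leq p^{\kappa_1}$. The case $j = 0$ is impossible because $p^{\lambda_1} \leq k < p^{\kappa_1 + 1}$ forces $\lambda_1 \leq \kappa_1$. If $0 < \Sigma^\lambda_j < p^{\kappa_1}$, the divisibility $p^{\lambda_{j+1}} \mid \Sigma^\lambda_j$ yields $p^{\lambda_{j+1}} \leq \Sigma^\lambda_j < p^{\kappa_1}$; writing $\Sigma^\lambda_j = u p^{\lambda_{j+1}}$, the requirement $\Sigma^\lambda_{j+1} = (u+1) p^{\lambda_{j+1}} > p^{\kappa_1}$ forces $u \geq p^{\kappa_1 - \lambda_{j+1}}$, and hence $\Sigma^\lambda_j \geq p^{\kappa_1}$, a contradiction. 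Once $\Sigma^\lambda_{j_1} = p^{\kappa_1}$ is secured, the tail $(\lambda_{j_1 + 1}, \dotsc, \lambda_s)$ is itself a $p$-composition of $k - p^{\kappa_1}$ (noting each $\lambda_{j_1 + r} \leq \kappa_1$, so $p^{\lambda_{j_1 + r}}$ divides both $p^{\kappa_1}$ and $\Sigma^\lambda_{j_1 + r}$), and its $p$-adic $p$-composition is $(\kappa_2, \dotsc, \kappa_l)$, so the induction proceeds. A parallel divisibility check (using $\bar\lambda^{(i)}_r \leq \kappa_i \leq \kappa_{i-1}$) shows that each slice $\bar\lambda^{(i)}$ is in fact a $p$-composition of $p^{\kappa_i}$.

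To finish, I would verify that $g$ has the required properties. Each factor of $\rho_i$ produced by Proposition \ref{P:p-comp-1} is of the form $\pi^{[p^{a}]}[b]$, a cyclic permutation of $p$ successive blocks of size $p^{a}$, hence an element of order $p$ with no fixed point on its support; the factors within $\rho_i$ have pairwise disjoint supports, as do the $\rho_i$ themselves for distinct $i$. Therefore $g$ has order $p$ and is fixed-point-free on $[1, kp]$. The associated $p$-composition of $g$ is $\lambda$ by construction, since Proposition \ref{P:p-comp-1} identifies the finest $\rho_i$-invariant partition as the one given by adding $1$ to each part of $\bar\lambda^{(i)}$, so subtracting $1$ again and concatenating over $i$ recovers $\lambda$ exactly.
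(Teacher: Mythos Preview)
Your argument is correct, but it takes a different route from the paper. The paper's proof is a short induction on the number of parts $s$: it peels off the \emph{last} part $\lambda_s$, applies the induction hypothesis to $(\lambda_1,\dotsc,\lambda_{s-1})$ (a $p$-composition of $r = k - p^{\lambda_s}$) to get $g_1 \in P_r$, and then sets $g = g_1 \cdot \pi^{[p^{\lambda_s}]}[k/p^{\lambda_s}]$, invoking $P_r \subseteq P_k$ (Lemma~\ref{L:Pn}) and the description $P_k = \langle \pi^{[p^i]}[j] \mid jp^i \leq k\rangle$. No refinement of $\kappa$ is established beforehand.

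By contrast, you first prove combinatorially that $\lambda$ refines the $p$-adic $p$-composition $\kappa$, and only then assemble $g$ factor by factor along $\kappa$. This is exactly the content of Proposition~\ref{P:refineconverse}, which in the paper is deduced \emph{from} Proposition~\ref{P:p-comp-2} via the group-theoretic interpretation of invariant partitions; you have reversed the logical order and given a direct arithmetic proof of the refinement. What you gain is an argument that is self-contained at the level of $p$-compositions and does not rely on $P_r \subseteq P_k$; what the paper gains is brevity, since the single-part induction sidesteps the case analysis on partial sums entirely. One small remark: your parenthetical ``parallel divisibility check'' for the slices $\bar\lambda^{(i)}$ with $i \geq 2$ is superfluous, since your induction on $l$ already delivers these slices as $p$-compositions of $p^{\kappa_i}$.
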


\begin{proof} \
If $s=1$, then we apply Proposition
\ref{P:p-comp-1}.
So suppose $s>1$ and let $r = \Sigma_{s-1}^{\lambda}$ ($= k - p^{\lambda_s}$). By induction
$(\lambda_1, \ldots,
\lambda_{s-1})$ is associated to some $g_1\in P_r$ of order $p$ which is fixed-point-free on $(0,pr]$.
Since $\pi^{[p^{\lambda_s}]}[k/p^{\lambda_s}] \in P_k$, is transitive and fixed-point-free on $(pr,pk]$, we see that
$g = g_1 \pi^{[p^{\lambda_s}]}[k/p^{\lambda_s}] \in P_k$ has order $p$, is fixed-point-free on $(0,pk]$, with associated $p$-composition $\lambda$.
\end{proof}

\begin{prop} \label{P:refineconverse}
Let $k \in \mathbb{Z}^+$, with $p$-adic $p$-composition $\kappa = (\kappa_1,\dotsc, \kappa_l)$.  Then every $p$-composition of $k$ is a refinement of $\kappa$.
\end{prop}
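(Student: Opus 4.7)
The plan is to prove this by strong induction on $k$, with the base case $k=1$ (so $\kappa=(0)$ and the only $p$-composition of $1$ is $(0)$) being trivial. The heart of the matter is a single lemma: every $p$-composition $\lambda=(\lambda_1,\dotsc,\lambda_s)$ of $k$ must have $p^{\kappa_1}$ among its partial sums $\Sigma^{\lambda}_j$. Once this lemma is in hand, splitting $\lambda$ at the relevant index and invoking the induction hypothesis on the tail will finish the argument.

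For the key lemma, first observe that every part of $\lambda$ satisfies $p^{\lambda_j}\leq k<p^{\kappa_1+1}$, so $\lambda_j\leq\kappa_1$. Choose $j$ maximal with $\Sigma^{\lambda}_j\leq p^{\kappa_1}$; suppose for contradiction that $\Sigma^{\lambda}_j<p^{\kappa_1}$. Since $p^{\kappa_1}\leq k=\Sigma^{\lambda}_s$ we must have $j<s$ and $\Sigma^{\lambda}_{j+1}>p^{\kappa_1}$. The defining property of a $p$-composition gives $p^{\lambda_{j+1}}\mid\Sigma^{\lambda}_{j+1}$, hence $p^{\lambda_{j+1}}\mid\Sigma^{\lambda}_j$; and $\lambda_{j+1}\leq\kappa_1$ gives $p^{\lambda_{j+1}}\mid p^{\kappa_1}$. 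But then $p^{\kappa_1}-\Sigma^{\lambda}_j$ is a positive multiple of $p^{\lambda_{j+1}}$ lying strictly inside the interval $(0,p^{\lambda_{j+1}})=(0,\Sigma^{\lambda}_{j+1}-\Sigma^{\lambda}_j)$, which is impossible.

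With $j$ chosen so that $\Sigma^{\lambda}_j=p^{\kappa_1}$, set $\mu^{(1)}:=(\lambda_1,\dotsc,\lambda_j)$ and $\nu:=(\lambda_{j+1},\dotsc,\lambda_s)$. Then $\mu^{(1)}$ is visibly a $p$-composition of $p^{\kappa_1}$, and $\nu$ is a $p$-composition of $k-p^{\kappa_1}$: its $m$-th partial sum equals $\Sigma^{\lambda}_{j+m}-p^{\kappa_1}$, which is divisible by $p^{\lambda_{j+m}}$ because both $\Sigma^{\lambda}_{j+m}$ and $p^{\kappa_1}$ are (the latter because $\lambda_{j+m}\leq\kappa_1$). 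The $p$-adic $p$-composition of $k-p^{\kappa_1}$ is readily checked to be $(\kappa_2,\dotsc,\kappa_l)$ (in both cases $b_{\kappa_1}\geq 2$ and $b_{\kappa_1}=1$). Applying the induction hypothesis to $\nu$ produces a refinement $\nu=(\mu^{(2)},\dotsc,\mu^{(l)})$ with each $\mu^{(i)}$ a $p$-composition of $p^{\kappa_i}$, and concatenation exhibits $\lambda=(\mu^{(1)},\mu^{(2)},\dotsc,\mu^{(l)})$ as the desired refinement of $\kappa$.

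I expect the main obstacle to be precisely the divisibility argument in the second paragraph---in particular, isolating the correct index $j$ and verifying that the supposed gap between $\Sigma^{\lambda}_j$ and $p^{\kappa_1}$ is incompatible with the divisibility constraints. This is essentially the same calculation that appears in the commented-out proof of Corollary \ref{C:refinement} (the case $k=p^m$), so the content is already present in the paper; the present proof simply reorganizes it into a clean inductive framework that handles arbitrary $k$.
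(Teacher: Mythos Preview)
Your proof is correct and complete. The divisibility argument in your key lemma is clean: once you know every part satisfies $\lambda_j\leq\kappa_1$, the maximal index $j$ with $\Sigma^\lambda_j\leq p^{\kappa_1}$ must actually hit $p^{\kappa_1}$, and the inductive split goes through.

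Your route is genuinely different from the paper's. The paper argues via the group theory already set up: by Proposition~\ref{P:p-comp-2} the given $p$-composition $\lambda$ is associated to a fixed-point-free element $g\in P_k$ of order $p$, so $\{0,p\Sigma^\lambda_1,\dotsc,p\Sigma^\lambda_s\}$ is the \emph{finest} $g$-invariant partition of $(0,pk]$; on the other hand the direct-product structure $P_k\subseteq\prod_\kappa\sym{p^{\kappa_i+1}}$ forces $\{0,p\Sigma^\kappa_1,\dotsc,p\Sigma^\kappa_l\}$ to be $g$-invariant as well, and comparing the two partitions gives the refinement. Your argument bypasses the passage through $P_k$ entirely and works directly with the arithmetic definition of a $p$-composition, which makes the proposition self-contained and independent of Propositions~\ref{P:wreath}--\ref{P:p-comp-2}. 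The paper's version, by contrast, reinforces the dictionary between $p$-compositions and invariant partitions of elements of $P_k$, which is the language used throughout the rest of the paper. As you note, your divisibility computation is essentially the one appearing in the commented-out alternative proof, now organised into a clean induction on $k$; you should remove that parenthetical reference to commented-out material in any final version.
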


\begin{proof}
Let $\lambda = (\lambda_1,\dotsc, \lambda_s)$ be a $p$-composition of $k$.  By Proposition \ref{P:p-comp-2}, $\lambda$ is associated to a fixed-point-free element $g \in P_k$ of order $p$.  Equivalently, $\{0,p\Sigma^\lambda_1,p\Sigma^\lambda_2, \dotsc, p\Sigma^\lambda_s\}$ is the finest $g$-invariant partition of the interval $(0,pk]$.  But since $P_k \subseteq \prod_{\kappa} \sym{p^{\kappa_i+1}}$, we see that $\{ 0, p\Sigma^{\kappa}_1, p \Sigma^{\kappa}_2, \dotsc, p\Sigma^{\kappa}_l\}$ is a $g$-invariant partition of the interval $(0,pk]$.  Thus, $\{0,p\Sigma^\lambda_1,p\Sigma^\lambda_2, \dotsc, p\Sigma^\lambda_s\}$ is finer than $\{ 0, p\Sigma^{\kappa}_1, p \Sigma^{\kappa}_2, \dotsc, p\Sigma^{\kappa}_l\}$, and hence $\{0,\Sigma^\lambda_1,\Sigma^\lambda_2, \dotsc, \Sigma^\lambda_s\}$ is finer than $\{ 0, \Sigma^{\kappa}_1,  \Sigma^{\kappa}_2, \dotsc, \Sigma^{\kappa}_l\}$, which in turn shows that $\lambda$ is a refinement of $\kappa$.
\end{proof}

\section{Finding right  coset representatives} \label{S:exist}

We will now analyse the right cosets $Dx$ such that $(\Delta_k \sym{p})^x\cap P\neq 1$; from now on, $P = P_k$, as defined in Definition \ref{D:P}.
Our aim in this section is to find a good subset $X_k \subseteq \sym{pk}$ such that
\begin{itemize}
\item $(\Delta_k \sym{p})^{y}\cap P\neq 1$ for all $y \in X_k$;
\item if $(\Delta_k \sym{p})^x\cap P\neq 1$, then there exists $y \in X_k$ such that $Dx = Dy$.
\end{itemize}

When $(\Delta_k \sym{p})^x\cap P\neq 1$, this is a non-trivial $p$-subgroup of $P$ conjugate to a subgroup of
$\Delta_k \sym{p}$, and hence it is generated by a fixed-point-free element $y$ of $P$ of order $p$.
Clearly we can replace $x$ by elements of the form $dx$ with $d\in D$ without altering the right coset $Dx$. Taking $d$ suitably in $\Delta_k\sym{p} \subseteq D$ will allow us to have $y=(\Delta_k \pi)^x$ in $P$.
Such $x$ takes orbits of $\Delta_k \pi$ to orbits of $y$, and the order in which these orbits
appear can be controlled by modifying with $d\in \sym{k}^{[p]}$. The following makes this
precise.

\begin{prop} \label{P:prelim}
Let $\kappa$ and $P$ be as  in Definition \ref{D:P}.  Let $x \in \sym{pk}$ such that $(\Delta_k \sym{p})^x \cap P \ne 1$.  Then for each
$r\in [1,l]$ there exists $x_r \in \sym{p^{\kappa_r+1}}$  such that
\begin{itemize}
\item $\prod_{\kappa} x_j \in Dx$;
\item $(\Delta_{p^{\kappa_r}} \pi)^{x_r} \in R_{\kappa_r}$.
\end{itemize}
\end{prop}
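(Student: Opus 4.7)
The plan is to use the two factors of $D = \Delta_k\sym{p} \times \sym{k}^{[p]}$ successively: first the $\Delta_k\sym{p}$ factor to arrange that the ``standard'' diagonal $p$-cycle $\Delta_k\pi$ is conjugated into $P$, and then the $\sym{k}^{[p]}$ factor to align the orbits of $\Delta_k\pi$ with the big-block decomposition of $P$.

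For the first step, any nontrivial element of $(\Delta_k\sym{p})^x \cap P$ has order $p$ (as $P$ is a $p$-group), and so has the form $(\Delta_k\sigma)^x$ for some $p$-cycle $\sigma \in \sym{p}$.  Since $\sym{p}$ is transitive on its $p$-cycles by conjugation, I would write $\sigma = \tau^{-1}\pi\tau$ and set $d = \Delta_k\tau \in D$; a one-line computation then gives $(\Delta_k\pi)^{dx} = (\Delta_k\sigma)^x \in P$, so after replacing $x$ by $dx$ I may assume $y := (\Delta_k\pi)^x$ lies in $P$.

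For the second step, the orbits of $\Delta_k\pi$ are exactly the blocks $O_i := ((i-1)p, ip]$, $i \in [1,k]$, while $y \in P$ decomposes as $y = \prod_\kappa y_r$ with $y_r \in R_{\kappa_r}$ supported on the big block $B_r := (p\Sigma^{\kappa}_{r-1}, p\Sigma^{\kappa}_r]$.  Since each $y$-orbit has size $p$ and lies in exactly one $B_r$, the set $I_r := \{ i \in [1,k] \mid O_i \cdot x \subseteq B_r \}$ has size $p^{\kappa_r}$ by counting.  As $(\Sigma^{\kappa}_{r-1}, \Sigma^{\kappa}_r]$ has the same size, I can choose $\sigma' \in \sym{k}$ sending $(\Sigma^{\kappa}_{r-1}, \Sigma^{\kappa}_r]$ onto $I_r$ for every $r$, and set $d' := (\sigma')^{[p]} \in \sym{k}^{[p]} \subseteq D$.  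By Lemma~\ref{L:ready}(2), $d'$ commutes with $\Delta_k\pi$, so the conclusion of the first step still holds for $d'x$.

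Finally, the construction of $\sigma'$ forces $d'x$ to send each $O_i$ with $i \in (\Sigma^{\kappa}_{r-1}, \Sigma^{\kappa}_r]$ into $B_r$, so $d'x$ preserves the partition $\{B_r\}$ and thus decomposes as $d'x = \prod_\kappa x_r$ with $x_r \in \sym{p^{\kappa_r+1}}$.  Writing $\Delta_k\pi = \prod_\kappa \Delta_{p^{\kappa_r}}\pi$ with the same shifts and conjugating by $d'x$ gives $(\Delta_{p^{\kappa_r}}\pi)^{x_r} = y_r \in R_{\kappa_r}$ for each $r$.  The main obstacle is the second step: one needs the $y$-orbits to distribute into the big blocks with the correct multiplicities $p^{\kappa_r}$, and the correcting element $d'$ must lie in $\sym{k}^{[p]}$ so as not to disturb the first step --- both are handled cleanly by Lemma~\ref{L:ready}(2) together with a size count.
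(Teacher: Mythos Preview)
Your proof is correct and follows essentially the same approach as the paper's: first multiply by an element of $\Delta_k\sym{p}$ to arrange $(\Delta_k\pi)^x \in P$, then multiply by an element of $\sym{k}^{[p]}$ to align the $\Delta_k\pi$-orbits with the big blocks of $P$, and finally decompose the adjusted element blockwise. Your exposition is in fact slightly more explicit than the paper's in two places --- the size count giving $|I_r| = p^{\kappa_r}$ and the appeal to Lemma~\ref{L:ready}(2) to ensure $d'$ commutes with $\Delta_k\pi$ --- but these are exactly the justifications the paper's proof implicitly uses.
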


\begin{proof}
Let $1 \ne y \in (\Delta_k \sym{p})^x \cap P$, say $y = (\Delta_k g)^x$ for some $p$-cycle $g \in \sym{p}$. Then $g = \pi^{\tau}$ for some
$\tau \in \sym{p}$; and by replacing $x$ with $(\Delta_k \tau)  x \in Dx$
we may assume $g= \pi$.

Since $y \in P = P(1) \times \dotsb \times P(l)$, we have $y = \prod_{r=1}^l y_r$ where for each $r$, $y_r \in P(r) = R_{\kappa_r}[\Sigma^\kappa_r/p^{\kappa_r}]$.  As $
\prod_{r=1}^l y_r =  (\Delta_k \pi)^x = \prod_{i=1}^k (\pi[i])^x$, there exists $\sigma \in \sym{k}$ permuting the cycles of
$\Delta_k\pi$ such that, for each $r \in [1,l]$, we have
$\prod_{i = \Sigma_{r-1}^{\kappa}+1}^{\Sigma_r^{\kappa}} \pi[i\sigma]^x = y_{r}$. Recall that $\prod_{i=1}^k\pi[i\sigma] = (\Delta_k\pi)^{\sigma^{[p]}}$, so
by  replacing $x$ with $\sigma^{[p]} x \in Dx$, we may assume that
$$
((\Delta_{p^{\kappa_r}} \pi)[\Sigma^{\kappa}_r/p^{\kappa_r}])^x =
y_r \in P(r) = R_{\kappa_r}[\Sigma^\kappa_r/p^{\kappa_r}]
 $$
for all $r \in [1,l]$.  Thus $x$ preserves the orbits of $P$ and we
can write  $x = \prod_{r=1}^l z_r$, where $z_r \in \sym{p^{\kappa_r+1}}[\Sigma^\kappa_r/p^{\kappa_r}]$ for all $r$, and
$y_r =
((\Delta_{p^{\kappa_r}} \pi)[\Sigma^{\kappa}_r/p^{\kappa_r}])^{z_r}$.  The proposition follows by defining $x_r$ to be the element of $\sym{p^{\kappa_r+1}}$ such that $x_r[\Sigma^\kappa_r/p^{\kappa_r}] = z_r$.
\end{proof}

This shows in particular that we can take $x$ so that it respects the direct factors of $P$.
We have the following refinement, which concentrates on a fixed factor of $P$. The proof is analogous to that of Proposition \ref{P:prelim}.

\begin{prop} \label{P:prelim2}
Suppose that $(\Delta_{p^m} \pi)^x \in R_m$ for some $x \in \sym{p^{m+1}}$.  Then there exist a $p$-composition $\lambda = (\lambda_1,\lambda_2,\dotsc, \lambda_s)$ of $p^m$, and elements $x_r \in \sym{p^{\lambda_r+1}}$ for all $r \in [1,s]$ such that
\begin{itemize}
\item $\prod_{\lambda} x_r \in \sym{p^m}^{[p]}\, x$;
\item $(\Delta_{p^{\lambda_r}} \pi)^{x_r} \in R_{\lambda_r} \setminus B_{\lambda_r}$.
\end{itemize}
\end{prop}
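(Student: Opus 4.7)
The plan is to mimic the proof of Proposition~\ref{P:prelim}, replacing the use of the Sylow direct product decomposition $P = P(1)\times\dots\times P(l)$ by an appeal to the wreath-product normal form of Proposition~\ref{P:wreath}.

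First, set $y:=(\Delta_{p^m}\pi)^x\in R_m$. Since $\Delta_{p^m}\pi$ is fixed-point-free of order $p$ on $(0,p^{m+1}]$, so is $y$. Apply Proposition~\ref{P:wreath} to $y$: the finest $y$-invariant partition $\{0=a_0<a_1<\dots<a_s=p^{m+1}\}$ has $a_j-a_{j-1}=p^{\lambda_j+1}$ for some $\lambda_j\geq 0$ (the shift by one is legitimate because every $y$-orbit has size $\geq p$). Dividing through by $p$, the associated $p$-composition of $p^{m+1}$ descends to a $p$-composition $\lambda=(\lambda_1,\dots,\lambda_s)$ of $p^m$, and the same proposition yields a factorisation $y=\prod_{\lambda}\gamma_r$ with $\gamma_r\in R_{\lambda_r}$. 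If some $\gamma_r$ lay in $B_{\lambda_r}$, then the $r$-th sub-block $(p\Sigma^\lambda_{r-1},p\Sigma^\lambda_r]$ could be further split into $p$ equal $\gamma_r$-invariant pieces, contradicting the minimality of the chosen partition; hence $\gamma_r\in R_{\lambda_r}\setminus B_{\lambda_r}$ for every $r$.

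Next, observe that $x$ induces a bijection from the $p^m$ orbits of $\Delta_{p^m}\pi$ (the consecutive $p$-blocks $O_i=\{(i-1)p+1,\dots,ip\}$) onto the $p^m$ orbits of $y$. The $y$-orbits group naturally by sub-block: the $r$-th sub-block contains exactly $p^{\lambda_r}$ of them. Pulling this grouping back through $x$ partitions $\{1,\dots,p^m\}$ into sets of sizes $p^{\lambda_1},\dots,p^{\lambda_s}$. Choose $\sigma\in\sym{p^m}$ that reorders these sets into the consecutive intervals $(\Sigma^\lambda_{r-1},\Sigma^\lambda_r]$, and replace $x$ by $\sigma^{[p]}x$, which lies in $\sym{p^m}^{[p]}x$. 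The new $x$ then maps the union $(p\Sigma^\lambda_{r-1},p\Sigma^\lambda_r]$ of the relevant $\Delta_{p^m}\pi$-orbits into itself for each $r$, and consequently factors as $x=\prod_{\lambda}x_r$ with $x_r\in\sym{p^{\lambda_r+1}}$.

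Finally, $\Delta_{p^m}\pi=\prod_{\lambda}\Delta_{p^{\lambda_r}}\pi$, the factors on the right having pairwise disjoint supports and hence commuting. Conjugating this factorisation by $x=\prod_\lambda x_r$ and comparing with $y=\prod_\lambda\gamma_r$ block-by-block gives $(\Delta_{p^{\lambda_r}}\pi)^{x_r}=\gamma_r\in R_{\lambda_r}\setminus B_{\lambda_r}$, which is exactly what is required. The only mildly delicate ingredient is the minimality argument that forces $\gamma_r\notin B_{\lambda_r}$; the remainder is routine bookkeeping of the shifts $[\Sigma^\lambda_r/p^{\lambda_r}]$ together with the direct appeal to Proposition~\ref{P:wreath}.
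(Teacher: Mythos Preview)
Your argument is correct and follows the same line as the paper's own proof: decompose $(\Delta_{p^m}\pi)^x$ via Proposition~\ref{P:wreath}, then premultiply $x$ by a suitable $\sigma^{[p]}\in\sym{p^m}^{[p]}$ to align the orbit blocks so that $x$ factors as $\prod_\lambda x_r$. The only remark is that your ``mildly delicate'' minimality argument for $\gamma_r\notin B_{\lambda_r}$ is unnecessary, since this conclusion is already part of the statement of Proposition~\ref{P:wreath}(2).
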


\begin{proof}
By Proposition \ref{P:wreath} and Proposition \ref{P:p-comp-1}, there exist a $p$-composition $\lambda = (\lambda_1, \lambda_2, \dotsc, \lambda_s)$ of $p^m$, and elements $\gamma_r \in R_{\lambda_r} \setminus B_{\lambda_r}$ for $r \in [1,s]$, such that $(\Delta_{p^m} \pi)^x
= \prod_{\lambda}\gamma_r$.  Thus there is some
$\sigma \in \sym{p^m}$ such that $(\prod_{i= \Sigma^\lambda_{r-1} + 1}^{\Sigma^\lambda_r} \pi[i\sigma])^x = \gamma_r[\Sigma^\lambda_r/p^{\lambda_r}]$ for all $r\in [1,s]$.
By replacing $x$ by $\sigma^{[p]} x \in \sym{p^m}^{[p]}\, x$, we may assume that
$$
((\Delta_{p^{\lambda_r}} \pi)[\Sigma^\lambda_r/p^{\lambda_r}])^x
= \gamma_r[\Sigma^\lambda_r/p^{\lambda_r}]
$$
for all $r$.  Thus $x = \prod_{r=1}^s z_r$, where $z_r \in \sym{p^{\lambda_r+1}}[\Sigma^\lambda_r/p^{\lambda_r}]$, and
$$\gamma_r[\Sigma^\lambda_r/p^{\lambda_r}] =
((\Delta_{p^{\lambda}} \pi)[\Sigma^{\lambda}_r/p^{\lambda_r}])^{z_r}.$$
The Proposition follows by defining $x_r$ to be the element of $\sym{p^{\lambda_r+1}}$ such that $x_r[\Sigma^\lambda_r/p^{\lambda_r}] = z_r$.
\end{proof}

We now find good coset representatives for these $x_r$, and we
start by defining  distinguished elements which will conjugate $\Delta_{p^m}\pi$ to
$(\pi^{[p^m]})^t$.

\begin{Def} \label{D:z}
For $t \in \mathbb{Z}_p^*$ and $m \in \mathbb{Z}_{\geq 0}$, define  $z_{m,t} \in \sym{p^{m+1}}$ by
$$((i-1)p+j)z_{m,t} := i+ (\overline{t(j-1)})p^m \quad (i\leq p^m, j\leq p).
$$
Here, and hereafter, given an integer $x$, we write $\overline{x}$ for the residue of $x \pmod p$ with $0\leq \overline{x} < p$.
\end{Def}

For example, if $m=0$ then $z_{0,t}$ normalizes the group $\langle \pi\rangle$ (see Lemma \ref{L:z} below), and $z_{0,1} = 1$.


\begin{note} \label{N:tableaux}
It will be convenient to describe $z_{m,t}$ by making use of the natural faithful action of $\sym{n}$ on standard tableaux.
We take tableaux with $p^m$ rows and each row of length $p$, i.e.\ of
shape $(p^{p^m})$.  Then
$z_{m,1}$ is the element of $\sym{p^m+1}$ which sends the tableau
    $$
    \begin{smallmatrix}
    1 & 2 & \dotsb & p \\
    p+1 & p+2 & \dotsb & 2p \\
    \vdots & \vdots & & \vdots \\
    p^m -p + 1 & p^m -p + 2 & \dotsb & p^{m+1}
    \end{smallmatrix} \quad{\text{to}} \qquad
    \begin{smallmatrix}
    1 & p^m+1 & \dotsb & (p-1)p^m + 1 \\
    2 & p^m+2 & \dotsb & (p-1)p^m + 2 \\
    \vdots & \vdots & & \vdots \\
    p^m & 2p^m & \dotsb & p^{m+1}
    \end{smallmatrix}\ .$$
We denote the first tableau by $T$, and the second one by $\tilde{T}$, and we
will use these notations later.

In general, $z_{m,t}$ sends $T$ to the tableau
    $$
    \begin{smallmatrix}
    1 & \overline{t}p^m+1 & \dotsb & (\overline{t(p-1)})p^m + 1 \\
    2 & \overline{t}p^m+2 & \dotsb & (\overline{t(p-1)})p^m + 2 \\
    \vdots & \vdots & & \vdots \\
    p^m & (\overline{t}+1)p^m & \dotsb & (\overline{t(p-1)+1})p^{m}
    \end{smallmatrix}\ $$
which is obtained from $\tilde{T}$ by permuting the columns according to
$z_{0,t}$.
\end{note}

\begin{lem}\label{L:z} Let $z_{m,t}$ be as above.
\begin{enumerate}
\item
We have $\pi[i]^{z_{m,t}} = (i, p^m +i, \dotsc, (p-1)p^m + i)^t$ for $i\in [1, p^m]$.  Thus,
$(\Delta_{p^m} \pi)^{z_{m,t}} = (\pi^{[p^m]})^t$.

\item
For $s, t \in \Z_p^*$ we have
$(\Delta_{p^m}z_{0,t})\cdot z_{m,s} = z_{m, ts}$.
\end{enumerate}
\end{lem}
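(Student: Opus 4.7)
The plan is to verify both identities by direct pointwise computation, plugging in the generic point $(i-1)p+j$ with $i\in[1,p^m]$ and $j\in[1,p]$ and unwinding Definition~\ref{D:z} on each side.

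For part (1), I would first compute, for fixed $i$, the image of the support of $\pi[i]$ under $z_{m,t}$: by definition this support is $\{(i-1)p+j : j\in[1,p]\}$, and $((i-1)p+j)z_{m,t} = i + \overline{t(j-1)}\,p^m$. As $j$ ranges over $[1,p]$, the residues $\overline{t(j-1)}$ run through all of $[0,p-1]$ (since $t\in\Z_p^*$), so the support of the conjugate $\pi[i]^{z_{m,t}}$ is $\{i, i+p^m,\dotsc, i+(p-1)p^m\}$. Using the right-action convention that $a\sigma^x = ((ax^{-1})\sigma)x$, equivalently $(bx)\sigma^x = (b\sigma)x$, I would then show
$$
(i+\overline{t(j-1)}p^m)\cdot \pi[i]^{z_{m,t}} \;=\; ((i-1)p+j\pi)z_{m,t} \;=\; i+\overline{tj}\,p^m,
$$
with the usual convention $\overline{t\cdot 0}=0$ handling the case $j=p$. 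Setting $k:=\overline{t(j-1)}$ this rule reads $i+kp^m \mapsto i+\overline{k+t}\,p^m$, which is precisely the action of $(i,p^m+i,\dotsc,(p-1)p^m+i)^t$. The second assertion of (1) is then immediate: the cycles $(i,p^m+i,\dotsc,(p-1)p^m+i)$ for distinct $i\in[1,p^m]$ commute (disjoint supports), so
$$
(\Delta_{p^m}\pi)^{z_{m,t}} = \prod_{i=1}^{p^m} \pi[i]^{z_{m,t}} = \Bigl(\prod_{i=1}^{p^m} (i,p^m+i,\dotsc,(p-1)p^m+i)\Bigr)^{\!t} = (\pi^{[p^m]})^t.
$$

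For part (2), I would again apply both sides to a generic point $(i-1)p+j$. From the definition, $z_{0,t}$ is supported on $[1,p]$ and sends $j\mapsto 1+\overline{t(j-1)}$, so $z_{0,t}[i]$ sends $(i-1)p+j$ to $(i-1)p+1+\overline{t(j-1)}$ and fixes everything outside the $i$-th block. Thus $\Delta_{p^m}z_{0,t}$ acts as this map on every block, giving $(i-1)p+j \mapsto (i-1)p+j'$ with $j':=1+\overline{t(j-1)}\in[1,p]$. Applying $z_{m,s}$ now yields
$$
((i-1)p+j')\,z_{m,s} = i + \overline{s(j'-1)}\,p^m = i+\overline{s\cdot \overline{t(j-1)}}\,p^m = i+\overline{st(j-1)}\,p^m,
$$
which by Definition~\ref{D:z} is exactly $((i-1)p+j)\,z_{m,ts}$.

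There is no real obstacle; the only mild annoyance is keeping the conventions straight — the right-action convention $\sigma^x = x^{-1}\sigma x$, the meaning of the overline as reduction mod~$p$ in $[0,p-1]$, and the block-indexing of $\pi[i]$ and $z_{0,t}[i]$. Once a generic point is fixed and each side is expanded via Definition~\ref{D:z}, both claims collapse to the identity $\overline{s\cdot \overline{t(j-1)}}=\overline{st(j-1)}$, which is trivial.
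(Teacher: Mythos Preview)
Your proof is correct and essentially the same as the paper's: both are direct verifications from Definition~\ref{D:z}. For part~(1) the paper reads off the cycle structure from the tableau picture in Note~\ref{N:tableaux} together with the standard conjugation-in-cycle-notation formula, while you carry out the equivalent coordinate computation; for part~(2) the paper simply writes ``direct verification using the definition of $z_{m,t}$'', which is exactly what you have spelled out.
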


\begin{proof}
For the first part, consider the tableaux $T$ and $T z_{m,t}$ above.
The rows of $T$ are the cycles
of $\Delta_{p^m} \pi$, and the rows of $Tz_{m,t}$ are the cycles of $(\pi^{[p^m]})^t$. The
standard formula for conjugation gives the statement.
The second part follows from a direct verification using the definition of $z_{m,t}$.
\end{proof}


We denote by $R_m^0$ the subgroup of $B_m$ consisting of elements which fix
the last block of $p^m$ elements pointwise, i.e.\
$$R_m^0 = \prod_{i=1}^{p-1} R_{m-1}[i]$$

\begin{prop} \label{P:prelim3}
Suppose that $(\Delta_{p^m} \pi)^x \in R_m \setminus B_m$ for some $x \in \sym{p^{m+1}}$.  Then
there exist unique $t \in \mathbb{Z}_p^*$, $h \in H_{p^m}$ and $b \in R_m^0$
such that
$$
hz_{m,t}b \in \sym{p^{m}}^{[p]}\,x.
$$
\end{prop}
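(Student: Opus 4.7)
The plan is to reduce the problem to an application of Corollary \ref{C:conj} together with an identification of the centralizer of $\Delta_{p^m}\pi$ in $\sym{p^{m+1}}$.

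First I would set $y := (\Delta_{p^m}\pi)^x$, which by hypothesis is an element of order $p$ in $R_m\setminus B_m$. By Corollary \ref{C:conj}, there exist unique $t\in\mathbb{Z}_p^*$ and $b\in R_m^0=\prod_{j=1}^{p-1}R_{m-1}[j]$ such that $y=((\pi^{[p^m]})^t)^b$. Combining this with Lemma \ref{L:z}(1), which gives $(\Delta_{p^m}\pi)^{z_{m,t}}=(\pi^{[p^m]})^t$, one obtains $(\Delta_{p^m}\pi)^{z_{m,t}b}=y=(\Delta_{p^m}\pi)^x$, so $w:=x(z_{m,t}b)^{-1}$ lies in the centralizer $C:=C_{\sym{p^{m+1}}}(\Delta_{p^m}\pi)$.

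The next step is to describe $C$ explicitly. Since $\Delta_{p^m}\pi$ is the product of the $p^m$ disjoint $p$-cycles $\pi[1],\ldots,\pi[p^m]$, its centralizer is isomorphic to $C_p\wr\sym{p^m}$. More precisely, $H_{p^m}$ clearly centralizes $\Delta_{p^m}\pi$, and by Lemma \ref{L:ready}(2) so does $\sym{p^m}^{[p]}$; by Lemma \ref{L:ready}(1), $\sym{p^m}^{[p]}$ normalizes $H_{p^m}$ by permuting its factors; and $H_{p^m}\cap\sym{p^m}^{[p]}=1$ since elements of $H_{p^m}$ preserve each block of size $p$ setwise while nontrivial elements of $\sym{p^m}^{[p]}$ do not. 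A cardinality check then gives $C=\sym{p^m}^{[p]}\cdot H_{p^m}=H_{p^m}\rtimes\sym{p^m}^{[p]}$, with unique factorization of every element as $\sigma^{[p]}\cdot h$.

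Applying this to $w$ yields unique $\sigma\in\sym{p^m}$ and $h\in H_{p^m}$ with $x=\sigma^{[p]}hz_{m,t}b$, so $hz_{m,t}b\in\sym{p^m}^{[p]}\,x$. For uniqueness: if also $h'z_{m,t'}b'\in\sym{p^m}^{[p]}\,x$, then conjugating $\Delta_{p^m}\pi$ by both $hz_{m,t}b$ and $h'z_{m,t'}b'$ gives the same element $y$ (because $H_{p^m}\subseteq C$), so $((\pi^{[p^m]})^{t'})^{b'}=((\pi^{[p^m]})^t)^b$, whence $t'=t$ and $b'=b$ by the uniqueness clause of Corollary \ref{C:conj}; then $h^{-1}h'\in\sym{p^m}^{[p]}\cap H_{p^m}=1$ forces $h'=h$.

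The only non-routine step is the centralizer identification, but once one observes that $\Delta_{p^m}\pi$ has $p^m$ disjoint $p$-cycles aligned with the blocks permuted by $\sym{p^m}^{[p]}$, the structure $C_p\wr\sym{p^m}$ and its realization as $H_{p^m}\rtimes\sym{p^m}^{[p]}$ is immediate from the lemmas already established; the rest is bookkeeping.
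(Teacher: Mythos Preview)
Your proof is correct and follows the same approach as the paper: apply Corollary~\ref{C:conj} to obtain the unique $t$ and $b$, use Lemma~\ref{L:z}(1) to recognise $z_{m,t}b$ as conjugating $\Delta_{p^m}\pi$ to $y$, and then read off $h$ from the centraliser $C_{\sym{p^{m+1}}}(\Delta_{p^m}\pi)=H_{p^m}\rtimes\sym{p^m}^{[p]}$. The paper's version is terser---it simply asserts the form of the centraliser and leaves the uniqueness of $h$ implicit---whereas you spell out both the centraliser identification and the uniqueness argument in full, but the underlying argument is identical.
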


\begin{proof}
Since $(\Delta_{p^m} \pi)^x$ lies in $R_m \setminus B_m$ and has order $p$,
we apply Corollary \ref{C:conj}. Hence there exist unique $t \in \mathbb{Z}_p^*$ and $b \in R_m^0$ such that $(\Delta_{p^m} \pi)^x = ((\pi^{[p^m]})^t)^b$.
Thus
$$ (\Delta_{p^{m}} \pi)^x = ((\pi^{[p^{m}]})^t)^b = (\Delta_{p^{m}} \pi)^{z_{m,t}b},$$
and so $z_{m,t}b x^{-1}$ lies in the centraliser of $\Delta_{p^{m}} \pi$ in $\sym{p^{m+1}}$,
which is  $H_{p^m} \rtimes \sym{p^m}^{[p]}$.  Hence, there exists a unique $h \in H_{p^m}$ such that $h z_{m,t}b \in \sym{p^m}^{[p]}\, x$.
\end{proof}



Propositions \ref{P:prelim}, \ref{P:prelim2} and \ref{P:prelim3} suggest the following definition for the desired coset representatives.

\begin{Def}\label{D:X} \  For $m\in \Z_{\geq 0}$, we define the subset
$Y_m$ of $\sym{p^{m+1}}$ to be
$$Y_m:= \{ hz_{m,t}b \mid h\in H_{p^m}, t\in \Z_p^*, b\in R_m^0\}
$$
For example, $Y_0 = \{ hz_{0,t}: h\in \langle \pi \rangle, t\in \Z_p^*\}$,
which is the normalizer of $\langle \pi\rangle$ in $\sym{p}$.
For a $p$-composition $\lambda= (\lambda_1, \lambda_2, \ldots, \lambda_s)$ of
$d$, define
$$X_{\lambda}:= \prod_{\lambda} Y_{\lambda_r} \qquad { and } \qquad
X_d:= \bigcup_{\lambda} X_{\lambda}.
$$
\end{Def}

We also have a recursive description.

\begin{lem} \label{L:X} \hfill
\begin{enumerate}
\item Let $m \in \mathbb{Z}_{\geq 0}$. Then $$X_{p^m} = Y_m \cup \prod_{i=1}^p X_{p^{m-1}}[i]$$
(disjoint union, and where $X_{p^{-1}} = \emptyset$).
\item Let $k \in \mathbb{Z}^+$, with $p$-adic $p$-composition $\kappa = (\kappa_1,\dotsc,\kappa_l)$.  Then $$X_k = \prod_{\kappa} X_{p^{\kappa_i}}.$$
\end{enumerate}
\end{lem}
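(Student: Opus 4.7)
The plan is to derive both parts from the definition $X_d = \bigcup_{\lambda} X_{\lambda}$ by classifying the $p$-compositions $\lambda$ involved and exploiting the fact that $\prod_\lambda Y_{\lambda_r}$ behaves well under concatenation: if $\lambda$ is a concatenation $(\mu^{(1)}, \dotsc, \mu^{(t)})$, then $X_\lambda$ factors blockwise into the shifted pieces $X_{\mu^{(i)}}$ whose supports are pairwise disjoint, and taking unions of $p$-compositions will therefore commute with taking products of such pieces.

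For part (1), I would split the $p$-compositions of $p^m$ into two classes. The composition $(m)$ gives $X_{(m)} = Y_m$ by definition. Any composition $\lambda$ with more than one part is, by Corollary~\ref{C:refinement}, a refinement of $((m-1)^p)$, hence a concatenation $(\mu^{(1)}, \dotsc, \mu^{(p)})$ with each $\mu^{(i)}$ a $p$-composition of $p^{m-1}$. Unwinding the shift convention then gives $X_\lambda = \prod_{i=1}^p X_{\mu^{(i)}}[i]$, and since these factors sit in the disjoint blocks $((i-1)p^m, ip^m]$, the union over tuples $(\mu^{(1)}, \dotsc, \mu^{(p)})$ commutes with the product:
$$\bigcup_{\lambda \neq (m)} X_\lambda \; = \; \prod_{i=1}^p \Bigl( \bigcup_{\mu^{(i)}} X_{\mu^{(i)}}[i] \Bigr) \; = \; \prod_{i=1}^p X_{p^{m-1}}[i],$$
which combined with $X_{(m)} = Y_m$ proves the set-theoretic equality.

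For disjointness, I would show that elements of $Y_m$ do not preserve the partition $\{((r-1)p^m, rp^m] : 1 \leq r \leq p\}$ setwise (the case $m=0$ being vacuous since $X_{p^{-1}} = \emptyset$), while elements of $\prod_{i=1}^p X_{p^{m-1}}[i]$ clearly do. Since $h \in H_{p^m}$ and $b \in R_m^0$ each preserve every block of size $p^m$, it suffices to check $z_{m,t}$: Definition~\ref{D:z} gives $2\, z_{m,t} = 1 + \overline{t}\, p^m$, which lies in block $\overline{t}+1 \neq 1$ because $t \in \Z_p^*$ forces $\overline{t} \neq 0$.

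Part (2) follows the same template: Proposition~\ref{P:refineconverse} ensures that every $p$-composition $\lambda$ of $k$ refines $\kappa$, so $\lambda$ is a concatenation of $p$-compositions $\mu^{(i)}$ of $p^{\kappa_i}$ for $i = 1, \dotsc, l$; this gives $X_\lambda = \prod_\kappa X_{\mu^{(i)}}$, and the $l$ factors have supports in the disjoint blocks $(p\Sigma^{\kappa}_{i-1}, p\Sigma^{\kappa}_i]$, so again taking union over $\lambda$ commutes with the product to yield $X_k = \prod_\kappa X_{p^{\kappa_i}}$. I expect the only real obstacle in the whole argument to be bookkeeping with the shifts $[\Sigma^\lambda_j / p^{\lambda_j}]$ when passing between the two levels of concatenation — one must check that the partial sums of $\lambda$ restricted to the $i$-th chunk align, after translation by the starting offset, with the internal shifts of $X_{\mu^{(i)}}$ — but this reduces to a routine identity on partial sums.
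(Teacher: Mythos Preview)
Your approach is essentially the same as the paper's: both invoke Corollary~\ref{C:refinement} and Proposition~\ref{P:refineconverse} to classify the relevant $p$-compositions as concatenations, with Lemma~\ref{L:refinement} supplying the converse that every such concatenation is a $p$-composition. You supply more detail than the paper's one-line proof, in particular a clean disjointness argument (checking that $z_{m,t}$ moves $2$ out of the first $p^m$-block) that the paper leaves implicit.
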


\begin{proof} This follows from Lemma \ref{L:refinement}, Corollary \ref{C:refinement}
and Proposition \ref{P:refineconverse}.
\end{proof}

We can now state the main theorem of this section.

\begin{thm} \label{T:exist}
Let $P$ be as in Definition \ref{D:P}.  Let $x \in \sym{pk}$ such that
$(\Delta_k\sym{p})^x \cap P \ne 1$.  Then there exists $x_0 \in X_k$ such that $x_0 \in Dx$.
\end{thm}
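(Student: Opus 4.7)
The plan is to apply the three preparatory propositions (\ref{P:prelim}, \ref{P:prelim2}, \ref{P:prelim3}) in sequence, each time replacing our coset representative by something lying in the same coset $Dx$, and then to assemble the resulting pieces into an element of $X_k$ using the recursive description in Lemma~\ref{L:X}.

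First, I would invoke Proposition \ref{P:prelim} to replace $x$ with an element $y = \prod_\kappa x_r \in Dx$, where $x_r \in \sym{p^{\kappa_r+1}}$ satisfies $(\Delta_{p^{\kappa_r}}\pi)^{x_r} \in R_{\kappa_r}$ for each $r \in [1,l]$. Next, for each $r$, I would apply Proposition \ref{P:prelim2} to $x_r$: this produces a $p$-composition $\lambda^{(r)} = (\lambda^{(r)}_1, \ldots, \lambda^{(r)}_{s_r})$ of $p^{\kappa_r}$ and elements $x_{r,j} \in \sym{p^{\lambda^{(r)}_j+1}}$ such that $\prod_{\lambda^{(r)}} x_{r,j} \in \sym{p^{\kappa_r}}^{[p]}\, x_r$ and $(\Delta_{p^{\lambda^{(r)}_j}}\pi)^{x_{r,j}} \in R_{\lambda^{(r)}_j} \setminus B_{\lambda^{(r)}_j}$. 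Finally, with the latter condition in hand, Proposition \ref{P:prelim3} applied to each $x_{r,j}$ supplies $t_{r,j} \in \Z_p^*$, $h_{r,j} \in H_{p^{\lambda^{(r)}_j}}$, and $b_{r,j} \in R^0_{\lambda^{(r)}_j}$ with $h_{r,j} z_{\lambda^{(r)}_j, t_{r,j}} b_{r,j} \in \sym{p^{\lambda^{(r)}_j}}^{[p]}\, x_{r,j}$.

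I would then define
$$x_0 := \prod_\kappa \Bigl(\prod_{\lambda^{(r)}} \bigl(h_{r,j} z_{\lambda^{(r)}_j, t_{r,j}} b_{r,j}\bigr)\Bigr).$$
By Definition \ref{D:X}, each factor $h_{r,j} z_{\lambda^{(r)}_j, t_{r,j}} b_{r,j}$ lies in $Y_{\lambda^{(r)}_j}$, so the inner product lies in $X_{\lambda^{(r)}} \subseteq X_{p^{\kappa_r}}$, and the outer assembly lies in $\prod_\kappa X_{p^{\kappa_r}} = X_k$ by Lemma~\ref{L:X}(2).

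The remaining verification, which is the main bookkeeping task, is to check that $x_0 \in Dx$. For this, I would observe that the modification applied at each stage is by a top-group element: in Step~2 the left factor lives in $\sym{p^{\kappa_r}}^{[p]}$, which, after the shift by $\Sigma^\kappa_r / p^{\kappa_r}$, becomes a subgroup of $\sym{k}^{[p]}$; similarly, in Step~3 the left factor lies in $\sym{p^{\lambda^{(r)}_j}}^{[p]}$, which after the appropriate shift sits inside $\sym{k}^{[p]}$. Since $\sym{k}^{[p]} \subseteq D$, the compound modification from $y$ to $x_0$ is by an element of $D$, so $x_0 \in Dy = Dx$. The only subtlety here is to make sure the nested shifts compose correctly so that each modifying factor really does permute blocks of size $p$ within $[1,kp]$ (and hence sits in $\sym{k}^{[p]}$ rather than in some other subgroup), but this is immediate from the definition of the shift operation $\sigma \mapsto \sigma[r]$.
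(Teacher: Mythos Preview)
Your proposal is correct and follows exactly the approach the paper takes: the paper's own proof is the single sentence ``This follows from Propositions \ref{P:prelim}, \ref{P:prelim2} and \ref{P:prelim3}, and Lemma \ref{L:X}(2),'' and you have spelled out precisely how those pieces fit together. The bookkeeping you flag about nested shifts landing in $\sym{k}^{[p]}$ is the only content behind that one-liner, and you have handled it correctly.
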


\begin{proof}
This follows from Propositions \ref{P:prelim}, \ref{P:prelim2} and \ref{P:prelim3}, and Lemma \ref{L:X}(2).
\end{proof}

We note that the converse of Theorem \ref{T:exist} also holds.

\begin{lem}
We have $(\Delta_k\sym{p})^y \cap P \ne 1$ for all $y \in X_k$.
\end{lem}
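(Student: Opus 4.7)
The strategy is to exhibit a nontrivial element of $(\Delta_k\sym{p})^y \cap P$, namely $(\Delta_k \pi)^y$. Since $\Delta_k \pi \in \Delta_k \sym{p}$ has order $p$, the conjugate is automatically non-trivial, so the entire content of the lemma is the membership $(\Delta_k \pi)^y \in P$. Morally, this is a direct verification that the construction of $X_k$ in Propositions \ref{P:prelim}--\ref{P:prelim3} can be reversed: the set $X_k$ was engineered precisely to make this conjugate land in $P$.

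First I would unpack $y$ using Lemma \ref{L:X}. Part (2) together with the recursion in part (1) lets us write $y = \prod_\lambda y_r$ for some $p$-composition $\lambda = (\lambda_1,\dotsc,\lambda_s)$ of $k$ refining the $p$-adic composition $\kappa$ of $k$, with each $y_r = h_r z_{\lambda_r,t_r} b_r \in Y_{\lambda_r}$, where $h_r \in H_{p^{\lambda_r}}$, $t_r \in \mathbb{Z}_p^*$ and $b_r \in R_{\lambda_r}^0$.

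Then I would compute the conjugate block-by-block. The shifted supports of the $y_r$'s are the pairwise disjoint intervals $(p\Sigma_{r-1}^\lambda, p\Sigma_r^\lambda]$, and $\Delta_k\pi$ decomposes compatibly as the shifted product of the $\Delta_{p^{\lambda_r}}\pi$; hence
$$
(\Delta_k\pi)^y \;=\; \prod_\lambda (\Delta_{p^{\lambda_r}}\pi)^{y_r}.
$$
For each factor, $H_{p^{\lambda_r}}$ centralises $\Delta_{p^{\lambda_r}}\pi$ (both are products of the pairwise-commuting $\pi[a]$'s), so $h_r$ drops out; Lemma \ref{L:z}(1) turns the $z_{\lambda_r,t_r}$-conjugation into $(\pi^{[p^{\lambda_r}]})^{t_r}$; and the residual $b_r$-conjugation stays inside $R_{\lambda_r}$ because $b_r \in R_{\lambda_r}^0 \subseteq R_{\lambda_r}$. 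Thus $(\Delta_{p^{\lambda_r}}\pi)^{y_r} = ((\pi^{[p^{\lambda_r}]})^{t_r})^{b_r} \in R_{\lambda_r}$.

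Finally, I would use the refinement of $\kappa$ by $\lambda$ to land the resulting product in $P = \prod_\kappa R_{\kappa_i}$. Collecting the indices $r$ by the $\kappa$-block they refine, the sub-product on the $i$-th $\kappa$-block has support in $(p\Sigma_{i-1}^\kappa, p\Sigma_i^\kappa]$ and is a product of shifted elements of the various $R_{\lambda_r}$; these all lie in the shifted $R_{\kappa_i} = P_k(i)$, which is the one structural fact needed and is easily checked against the generator description $P_k = \langle \pi^{[p^j]}[a] : a p^j \leq k \rangle$ in the note after Definition \ref{D:P}. The only mild obstacle is this final bookkeeping of nested shifts; everything algebraic has already been handed to us by Lemma \ref{L:z} and by the very definition of $Y_m$.
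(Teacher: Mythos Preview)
Your proposal is correct and follows exactly the paper's approach: both show that the specific element $(\Delta_k\pi)^y$ lies in $P$. The paper's proof is a single sentence (``By the definition of the $z_{\lambda_r,t_r}$, the conjugate $(\Delta_k\pi)^y$ belongs to $P$''), and what you have written is precisely the detailed unpacking of that sentence.
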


\begin{proof}
By the definition of the $z_{\lambda_r, t_r}$, the conjugate $(\Delta_k \pi)^y$
belongs to $P$.
\end{proof}

\section{Uniqueness} \label{S:unique}

We have seen in the previous section that $(\Delta_k\sym{p})^y \cap P \ne 1$ for all
$y \in X_k$, and if $(\Delta_k\sym{p})^x \cap P \ne 1$,
then there exists $x_0 \in X_k$ such that $Dx_0  = Dx$.
However, $x_0$ need not be unique.  For example, instead of $x_0$, one may choose $(\Delta_k \pi)x_0$ which also lies in $X_k$.

In this section, we will show that, when $p \nmid k$, $X_{k-1}$ is a
transversal for the right cosets $Dx$ satisfying
$(\Delta_k\sym{p})^x\cap P\neq 1$.
We begin with the observation that a transversal for these right cosets can be chosen to be a subset of $X_{k-1}$.

\begin{prop} \label{P:reduction0}
Let $k \in \mathbb{Z}^+$ with $p \nmid k$.  Then $X_{k-1} \subseteq X_k$.  Furthermore, if $x \in X_k$, then there exists $y \in X_{k-1}$ such that $Dx = Dy$.
\end{prop}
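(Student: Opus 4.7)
The plan is to prove (i) by an immediate unfolding of definitions, and to prove (ii) by producing a single element of $\Delta_k\sym{p}$ that trivialises the ``last $p$-block'' factor of $x$ while keeping the remaining factors inside the appropriate $X_{p^{\kappa_i}}$. Write $\kappa=(\kappa_1,\dotsc,\kappa_l)$ for the $p$-adic $p$-composition of $k$; since $p\nmid k$, we have $\kappa_l=0$, and the $p$-adic $p$-composition of $k-1$ is $(\kappa_1,\dotsc,\kappa_{l-1})$. Lemma \ref{L:X}(2) then yields $X_{k-1}=\prod_{i=1}^{l-1}X_{p^{\kappa_i}}[\Sigma^{\kappa}_i/p^{\kappa_i}]$ and $X_k=\prod_\kappa X_{p^{\kappa_i}}$; since $1=z_{0,1}\in Y_0=X_1=X_{p^{\kappa_l}}$, every element of $X_{k-1}$ extends by a trivial $l$-th factor to an element of $X_k$, proving (i).

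For (ii), write $x=\prod_\kappa x_i$ with $x_i\in X_{p^{\kappa_i}}$; the last factor $x_l\in X_1=Y_0\subseteq\sym{p}$ acts on the last $p$-block. I would set $d:=\Delta_kx_l^{-1}\in\Delta_k\sym{p}\subseteq D$. The key structural observation is that $d$ splits compatibly with $\kappa$ as $d=\prod_\kappa\Delta_{p^{\kappa_i}}(x_l^{-1})$, and since disjoint supports commute,
\[ dx=\prod_\kappa\bigl(\Delta_{p^{\kappa_i}}(x_l^{-1})\cdot x_i\bigr), \]
with the $i=l$ factor collapsing to $\Delta_1(x_l^{-1})\cdot x_l=x_l^{-1}x_l=1$. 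Thus $dx$ fixes the last $p$-block, and since $d\in D$ we have $Dx=D(dx)$. So (ii) reduces to the following key lemma: \emph{for every $m\geq 0$, left multiplication by any element of $\Delta_{p^m}(Y_0)$ maps $X_{p^m}$ into itself.}

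I would prove the key lemma by induction on $m$, splitting $\Delta_{p^m}(Y_0)$ into its two generator types $\Delta_{p^m}(\pi^a)$ and $\Delta_{p^m}(z_{0,t})$ (since $Y_0=\{\pi^az_{0,t}:a\in[0,p-1],\,t\in\mathbb{Z}_p^*\}$), and using the decomposition $X_{p^m}=Y_m\cup\prod_{j=1}^pX_{p^{m-1}}[j]$ from Lemma \ref{L:X}(1). The base case $m=0$ is immediate because $X_1=Y_0$ is a group. On the ``wreath'' summand $\prod_j X_{p^{m-1}}[j]$ one uses the factorisation $\Delta_{p^m}(y)=\prod_{j=1}^p\Delta_{p^{m-1}}(y)[j]$ for $y\in Y_0$ and applies the inductive hypothesis factor-by-factor. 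For an element $hz_{m,t'}b\in Y_m$, one absorbs $\Delta_{p^m}(\pi^a)$ directly into $h$ using that $H_{p^m}$ is abelian; while for $\Delta_{p^m}(z_{0,t})$, one uses the conjugation relation $\pi[j]^{z_{0,t}[j]}=\pi[j]^t$ to move it past $h$ (producing a new $h'\in H_{p^m}$) and then invokes Lemma \ref{L:z}(2), $(\Delta_{p^m}z_{0,t})\cdot z_{m,t'}=z_{m,tt'}$, to fold it into the middle factor, giving $h'z_{m,tt'}b\in Y_m$.

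The only step requiring genuine care is this $Y_m$-case of the key lemma: one needs $\Delta_{p^m}(z_{0,t})$ to normalise $H_{p^m}$ (immediate from $\pi^{z_{0,t}}=\pi^t$ together with the fact that the $j'\ne j$ factors of $\Delta_{p^m}(z_{0,t})$ commute with $\pi[j]$), and then Lemma \ref{L:z}(2) is precisely the identity that recasts $(\Delta_{p^m}z_{0,t})\cdot z_{m,t'}$ back into the required normal form $z_{m,*}$. The $R_m^0$-factor $b$ plays no role anywhere, since all manipulations happen strictly to the left of $z_{m,t'}$.
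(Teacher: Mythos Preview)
Your argument is correct and follows essentially the same route as the paper: both multiply $x$ on the left by $\Delta_k(x_l^{-1})=\Delta_k(z_{0,u}h^{-1})\in D$ to kill the last factor, and both verify that the remaining factors stay in the appropriate $X$-sets using exactly the two ingredients you isolate, namely that $\Delta_{*}(z_{0,t})$ normalises $H_{*}$ and Lemma~\ref{L:z}(2). The only difference is packaging: the paper applies this computation directly to each $Y_{\lambda_r}$-factor of $a\in X_\lambda\subseteq X_{k-1}$ in one line, whereas you set it up as an inductive lemma on $X_{p^m}$ via the recursive decomposition of Lemma~\ref{L:X}(1); the induction is not needed (the ``wreath'' branch just unwinds to the same $Y$-factor computation), but it does no harm.
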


\begin{proof}
Let $\kappa = (\kappa_1,\kappa_2,\dotsc, \kappa_l)$ be the $p$-adic $p$-composition of $k$.  Since $p \nmid k$, we see that $\kappa_l = 0$, and $(\kappa_1,\dotsc, \kappa_{l-1})$ is the $p$-adic $p$-composition of $(k-1)$.  Thus, $X_k = X_{k-1} \times X_1[k]$ by Lemma \ref{L:X}(2).  The first assertion now follows as $1$ ($= z_{0,1}$) belongs to $X_1$.

For the second assertion, we have $x = a \cdot x'[k]$ where $a \in X_{k-1}$ and $x' = h z_{0,t}$ with $h \in H_1$ and $t \in \mathbb{Z}_p^*$.  Let $u$ be the inverse of $t$ in $\mathbb{Z}_p^*$.  Then $z_{0,u}$ is the inverse of $z_{0,t}$ by Lemma \ref{L:z}(2).  Define  $y: = \Delta_{k-1}(z_{0,u}h^{-1}) a$.
Then $y \in Dx$ since $\Delta_k(z_{0,u}h^{-1})\in D$ and
$$
\Delta_k (z_{0,u} h^{-1}) x = \Delta_{k-1} (z_{0,u}h^{-1})a \cdot (z_{0,u}h^{-1} x')[k] = y.$$
Furthermore, $y\in X_{k-1}$; to see this, note that $\Delta_{k-1} z_{0,u}$ normalises $H_{k-1}$ and
use Lemma \ref{L:z}(2).
\end{proof}

Before we continue, we make the following observation:

\begin{lem}\label{L:groups}  Let $m \in \mathbb{Z}_{\geq 0}$, and let $k \in \mathbb{Z}^+$ with $p$-adic $p$-composition $(\kappa_1,\dotsc,\kappa_l)$.  Then
\begin{enumerate}
\item $\sym{p^m}^{[p]} \cap \prod_{i=1}^p \sym{p^m}[i] = \prod_{i=1}^p
\sym{p^{m-1}}^{[p]}[i]$;
\item $\sym{k}^{[p]} \cap \prod_{\kappa} \sym{p^{\kappa_i+1}} = \prod_{\kappa}
\sym{p^{\kappa_i}}^{[p]}$.
\end{enumerate}
\end{lem}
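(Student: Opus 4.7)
The plan is to interpret each of the four subgroups as the stabilizer of a concrete set partition of $[1,p^{m+1}]$ or $[1,pk]$, and then to observe that an element of the ``block-permutation'' group $\sigma\mapsto\sigma^{[p]}$ lies in the intersection exactly when its underlying permutation $\sigma$ preserves a certain coarser partition on the base set; this will give the desired factorization directly.

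For part (1), I would first identify $\sym{p^m}^{[p]}$ as the subgroup of block-permutations of the partition $\mathcal{P}$ of $[1,p^{m+1}]$ into $p^m$ consecutive blocks of size $p$, and $\prod_{i=1}^p \sym{p^m}[i]$ as the setwise stabilizer of the coarser partition $\mathcal{Q}$ into $p$ blocks of size $p^m$. Then, using the definitions of $\sigma^{[b]}$ and $\tau[r]$, I would show that $\sigma^{[p]}$ preserves $\mathcal{Q}$ if and only if $\sigma\in\sym{p^m}$ preserves the corresponding partition of $[1,p^m]$ into $p$ blocks of size $p^{m-1}$, so that $\sigma$ factors uniquely as $\sigma=\prod_{i=1}^p\sigma_i$ with $\sigma_i\in\sym{p^{m-1}}[i]$. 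A direct unwinding of the shift conventions then yields $\sigma^{[p]}=\prod_{i=1}^p\sigma_i^{[p]}[i]$, establishing containment in $\prod_{i=1}^p\sym{p^{m-1}}^{[p]}[i]$. The reverse inclusion is immediate because each generator $\tau^{[p]}[i]$ on the right-hand side is visibly both a block-permutation of $\mathcal{P}$ and supported in the $i$-th block of $\mathcal{Q}$.

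Part (2) should go through by the same template, with $\mathcal{Q}$ replaced by the (non-uniform) partition $\mathcal{Q}'$ of $[1,pk]$ into $l$ blocks of sizes $p^{\kappa_1+1},\dotsc,p^{\kappa_l+1}$, separated at the points $p\Sigma^{\kappa}_i$. An element $\sigma^{[p]}\in\sym{k}^{[p]}$ lies in $\prod_{\kappa}\sym{p^{\kappa_i+1}}$ iff $\sigma$ preserves the analogous partition of $[1,k]$ into $l$ blocks of sizes $p^{\kappa_i}$; such $\sigma$ factor as $\sigma=\prod_{\kappa}\sigma_i$, and hence $\sigma^{[p]}=\prod_{\kappa}\sigma_i^{[p]}\in\prod_{\kappa}\sym{p^{\kappa_i}}^{[p]}$. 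The reverse inclusion is once more transparent, since each factor $\tau_i^{[p]}$ on the right is simultaneously a block-permutation under $\sym{k}^{[p]}$ and sits inside the appropriate shifted copy of $\sym{p^{\kappa_i+1}}$.

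The only piece of care required is the notational bookkeeping with the nested shift notations $\sigma^{[b]}$, $\tau[r]$, and the shorthand $\prod_{\kappa}$: one must check that the restriction of $\sigma^{[p]}$ to the $i$-th large block, re-indexed to start at $1$, coincides with $\sigma_i^{[p]}$ under the translation conventions. This is purely mechanical and I expect no genuine obstacle beyond that.
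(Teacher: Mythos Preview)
Your proposal is correct and is essentially the paper's own argument: the paper phrases the same observation in terms of the tableau $T$ (rows of $T$ being the size-$p$ blocks, and sub-tableaux of $p^{m-1}$ consecutive rows being the size-$p^m$ blocks), whereas you phrase it in terms of the set partitions $\mathcal{P}$ and $\mathcal{Q}$, but the content is identical. The reverse inclusion and the treatment of part (2) also match the paper's.
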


\begin{proof}
Note first that $\sym{p^m}^{[p]}$ consists {\it precisely} of the permutations of $\sym{p^{m+1}}$
which, in the natural action, induce row permutations on the tableau
$T$.  Suppose that $\sigma^{[p]} \in \prod_{i=1}^p \sym{p^m}[i]$ for some $\sigma \in \sym{p^m}$.
Then $\sigma^{[p]}$ leaves each successive block in $[1,p^{m+1}]$ of size $p^m$ invariant.  Thus, on the tableau $T$, $\sigma^{[p]}$ leaves each of the $p$ sub-tableaux consisting of $p^{m-1}$ successive rows of $T$ invariant setwise, so that $\sigma^{[p]} \in \prod_{i=1}^p \sym{p^{m-1}}^{[p]}[i]$.  This shows $\sym{p^m}^{[p]} \cap \prod_{i=1}^p \sym{p^m}[i] \subseteq \prod_{i=1}^p
\sym{p^{m-1}}^{[p]}[i]$.  The converse clearly holds, since $\sym{p^{m-1}}^{[p]} \subseteq \sym{p^m}$ and $$\prod_{i=1}^p \sym{p^{m-1}}^{[p]}[i] = (\prod_{i=1}^p \sym{p^{m-1}}[i])^{[p]} \subseteq \sym{p^m}^{[p]}.$$
This proves part (1).  Part (2) is similar.
\end{proof}

\begin{prop} \label{P:reduction}
Let $k \in \mathbb{Z}^+$ such that $p \nmid k$, with $p$-adic $p$-composition $\kappa = (\kappa_1,\kappa_2,\dotsc, \kappa_l)$.  For each $i \in [1,l)$, let $x_i, y_i \in X_{p^{\kappa_i}}$.  Let $x_l=y_l=1$, and let $x = \prod_{\kappa} x_i$ and $y = \prod_{\kappa}y_i$.  The following statements are equivalent:
\begin{enumerate}
\item $\sym{p^{\kappa_i}}^{[p]} x_i = \sym{p^{\kappa_i}}^{[p]} y_i$ for all $i \in [1,l)$;
\item $\sym{k}^{[p]}x = \sym{k}^{[p]} y$;
\item $Dx = Dy$.
\end{enumerate}
\end{prop}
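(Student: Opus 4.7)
The plan is to prove the cycle of implications $(1) \Rightarrow (2) \Rightarrow (3) \Rightarrow (1)$. The implication $(2) \Rightarrow (3)$ is immediate because $\sym{k}^{[p]}$ is a subgroup of $D = \Delta_k\sym{p} \times \sym{k}^{[p]}$.

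For $(1) \Rightarrow (2)$, if $y_i = \sigma_i x_i$ with $\sigma_i \in \sym{p^{\kappa_i}}^{[p]}$ for each $i \in [1,l)$, then the shifts of the $\sigma_i$ to the intervals $(p\Sigma^{\kappa}_{i-1}, p\Sigma^{\kappa}_i]$ have pairwise disjoint supports, so they commute and one may write $y = (\prod_{\kappa} \sigma_i)\, x$. Lemma~\ref{L:groups}(2) then places $\prod_{\kappa} \sigma_i$ in $\prod_{\kappa} \sym{p^{\kappa_i}}^{[p]} \subseteq \sym{k}^{[p]}$, which is~(2); the trivial $l$-th factor $x_l = y_l = 1$ plays no role.

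The main implication is $(3) \Rightarrow (1)$. I would write $y = dx$ with $d \in D$ and decompose $d = \Delta_k\tau \cdot \sigma^{[p]}$ uniquely with $\tau \in \sym{p}$ and $\sigma \in \sym{k}$. The crucial observation is that because $\kappa_l = 0$ (forced by $p \nmid k$) and $x_l = y_l = 1$, both $x$ and $y$ fix the last block $(pk-p, pk]$ pointwise, so $d$ must too. A direct computation using the explicit formulas for $\Delta_k\tau$ and $\sigma^{[p]}$ then forces $\tau = 1$ and $k\sigma = k$, so that $d = \sigma^{[p]} \in \sym{k}^{[p]}$ (and hence (2) as a by-product). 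Finally, $yx^{-1} = \sigma^{[p]}$ also equals $\prod_{\kappa}(y_i x_i^{-1})$, which lies in $\prod_{\kappa} \sym{p^{\kappa_i+1}}$; Lemma~\ref{L:groups}(2) then yields $\sigma^{[p]} \in \prod_{\kappa} \sym{p^{\kappa_i}}^{[p]}$, and uniqueness of factorisation into components with pairwise disjoint supports produces $y_i x_i^{-1} \in \sym{p^{\kappa_i}}^{[p]}$ for each $i$, which is~(1).

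I expect the main obstacle to be the step $(3) \Rightarrow (1)$, and within it, ruling out a non-trivial $\Delta_k\sym{p}$-component $\Delta_k\tau$ of $d$. This is precisely where the hypothesis $p \nmid k$ bites: it guarantees $\kappa_l = 0$, and the resulting fully trivial last block of size $p$ in $x$ and $y$ is exactly the foothold needed to kill $\tau$. Without it a non-trivial $\tau$ could be absorbed inside $D$ and the equivalence would collapse.
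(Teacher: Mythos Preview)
Your proof is correct and essentially the same as the paper's. The only difference is the order of operations in $(3)\Rightarrow(1)$: you kill the $\Delta_k\tau$ component first by observing that $d=yx^{-1}$ must fix the last block $(pk-p,\,pk]$ pointwise, whereas the paper first factorises $\sigma^{[p]}$ through Lemma~\ref{L:groups}(2) (carrying $\tau$ along as $\prod_\kappa \Delta_{p^{\kappa_i}}\tau$) and then reads off $\tau=1$ from the $l$-th component $\sigma_l^{[p]}(\Delta_1\tau)x_l=y_l$.
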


\begin{proof} \ $(1) \Rightarrow (2)$ and $(2) \Rightarrow (3)$
follow from the fact that
$$\prod_{\kappa} \sym{p^{\kappa_i}}^{[p]} \subseteq \sym{k}^{[p]} \subseteq D.$$

$(3) \Rightarrow (1)$: \
Suppose that $Dx = Dy$.  Then there exist $\sigma \in \sym{k}$ and $\tau \in \sym{p}$ such that
\begin{equation*}
\sigma^{[p]}(\Delta_k \tau) \prod_{\kappa}x_i = \prod_{\kappa} y_i. \tag{$*$}
\end{equation*}
This gives
$$
\sigma^{[p]} = \prod_{\kappa} (y_i x_i^{-1} (\Delta_{p^{\kappa_i}} \tau)^{-1})\in \prod_{\kappa} \sym{p^{\kappa_i+1}},$$
so that $\sigma^{[p]} \in \prod_{\kappa} \sym{p^{\kappa_i}}^{[p]}$ by Lemma \ref{L:groups}(2).
Thus, for each $i\in [1,l]$  there exists $\sigma_i \in \sym{p^{\kappa_i}}$
such that $\sigma^{[p]} = \prod_{\kappa} \sigma_i^{[p]}$.  Putting this into $(*)$, we get, for all $i \in [1,l]$,
$$
\sigma_i^{[p]}(\Delta_{p^{\kappa_i}} \tau) x_i = y_i.
$$
When $i = l$, we have $\sigma_l = 1$ since $\kappa_l=0$, and hence $\tau = 1$, since $x_l=y_l=1$.  Thus, we have, for $i \in [1,l)$,
$$
y_i = \sigma_i^{[p]} x_i \in \sym{p^{\kappa_i}}^{[p]} x_i.
$$
\end{proof}

In view of Proposition \ref{P:reduction}, our problem reduces to determining the necessary and sufficient conditions for $\sym{p^m}^{[p]}x = \sym{p^m}^{[p]}y$ where $x,y \in X_{p^m}$.

Recall that $X_{p^m}$ is a disjoint union of $Y_m$ and $\prod_{i=1}^p X_{p^{m-1}}[i]$ (Lemma \ref{L:X}(1)).  We consider these two sets separately.

\begin{prop} \label{P:reduction2}
Let $m \in \mathbb{Z}^+$ and for each $i \in [1,p]$, let $x_i, y_i \in X_{p^{m-1}}$.  The following statements are equivalent:
\begin{enumerate}
\item $\sym{p^m}^{[p]} (\prod_{i=1}^p x_i[i]) = \sym{p^m}^{[p]} (\prod_{i=1}^p y_i[i])$.
\item $\sym{p^{m-1}}^{[p]} x_i = \sym{p^{m-1}}^{[p]} y_{i}$ for all $i \in [1,p]$.
\end{enumerate}
\end{prop}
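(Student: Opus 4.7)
The plan is to prove the two directions separately, with direction $(2)\Rightarrow(1)$ being a straightforward construction and direction $(1)\Rightarrow(2)$ an application of Lemma \ref{L:groups}(1).

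For $(2)\Rightarrow(1)$, suppose that $y_i = \sigma_i^{[p]} x_i$ for some $\sigma_i\in\sym{p^{m-1}}$. The natural guess is that the element $\tau := \prod_{i=1}^p \sigma_i[i]\in\sym{p^m}$ will satisfy $\tau^{[p]}\prod_i x_i[i] = \prod_i y_i[i]$ in $\sym{p^{m+1}}$. The key identity I will verify (by directly unraveling Definitions of $\cdot^{[b]}$ and $\cdot[r]$) is that the two operations commute in the sense that $(\sigma_i[i])^{[p]} = (\sigma_i^{[p]})[i]$ in $\sym{p^{m+1}}$, and that the factors on the right have pairwise disjoint support so that $\prod_i (\sigma_i^{[p]})[i] = (\prod_i \sigma_i[i])^{[p]} = \tau^{[p]}$. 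Combined with the fact that $x_i[i]$ and $(\sigma_j^{[p]})[j]$ commute when $i\ne j$ and that $(\sigma_i^{[p]} x_i)[i] = (\sigma_i^{[p]})[i]\cdot x_i[i]$, this yields $\prod_i y_i[i] = \tau^{[p]}\prod_i x_i[i]$, and hence $(1)$.

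For $(1)\Rightarrow(2)$, suppose $\sigma^{[p]}\prod_{i=1}^p x_i[i] = \prod_{i=1}^p y_i[i]$ for some $\sigma\in\sym{p^m}$. Since the factors $x_i[i]$ commute pairwise (disjoint supports), the product $\prod_i x_i[i]$ is invertible with inverse $\prod_i x_i^{-1}[i]$, and so
\[
\sigma^{[p]} = \prod_{i=1}^p y_i[i]\cdot \prod_{i=1}^p x_i^{-1}[i] = \prod_{i=1}^p (y_i x_i^{-1})[i].
\]
This exhibits $\sigma^{[p]}$ as an element of $\sym{p^m}^{[p]}\cap \prod_{i=1}^p \sym{p^m}[i]$, which by Lemma \ref{L:groups}(1) equals $\prod_{i=1}^p \sym{p^{m-1}}^{[p]}[i]$. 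Since the factors $\sym{p^{m-1}}^{[p]}[i]$ have pairwise disjoint supports, the factorization into blocks is unique, so each $(y_i x_i^{-1})[i]$ must lie in $\sym{p^{m-1}}^{[p]}[i]$, that is, $y_i x_i^{-1}\in\sym{p^{m-1}}^{[p]}$. This gives $(2)$.

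The only step with any real content is the verification of the commutation identity $(\sigma_i[i])^{[p]} = (\sigma_i^{[p]})[i]$ used in $(2)\Rightarrow(1)$; but this is a direct check from the definitions (both sides act as $\sigma_i$ on the $p^{m-1}$ blocks of size $p$ sitting inside the $i$-th block of size $p^m$, and as the identity elsewhere). Everything else is an immediate consequence of Lemma \ref{L:groups}(1), which is doing the real work.
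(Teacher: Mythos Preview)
Your proof is correct and is exactly the detailed verification the paper has in mind when it says the proof ``is straightforward, using Lemma \ref{L:groups}(1)''. The identity $(\sigma_i[i])^{[p]} = (\sigma_i^{[p]})[i]$ you highlight is precisely the content of the easy containment $\prod_{i=1}^p \sym{p^{m-1}}^{[p]}[i] \subseteq \sym{p^m}^{[p]}$ in that lemma, while the other containment drives $(1)\Rightarrow(2)$, just as you indicate.
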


The proof of this is straightforward, using Lemma \ref{L:groups}(1).
It remains to consider the right cosets $\sym{p^m}^{[p]} x$ where $x \in Y_m$.

\begin{prop} \label{L:3}  Let $m \in \mathbb{Z}_{\geq 0}$.  Suppose that there exist $x \in X_{p^m}$ and $y \in Y_m$ such that
$\sym{p^m}^{[p]}x = \sym{p^m}^{[p]}y$. Then $x = y$.
\end{prop}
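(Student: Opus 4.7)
The plan is to extract from $\sym{p^m}^{[p]} x = \sym{p^m}^{[p]} y$, i.e.\ $y = \sigma^{[p]} x$ for some $\sigma \in \sym{p^m}$, information about the conjugate $(\Delta_{p^m}\pi)^x$, and use it to pin down $x$. The key observation is that $\sigma^{[p]}$ centralises $\Delta_{p^m}\pi$ by Lemma \ref{L:ready}(2), so $(\Delta_{p^m}\pi)^x = (\Delta_{p^m}\pi)^y$; this conjugate is an invariant of the coset $\sym{p^m}^{[p]} x$.

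First I would show $x \in Y_m$. By Lemma \ref{L:X}(1) the only alternative is $x \in \prod_{i=1}^p X_{p^{m-1}}[i]$, say $x = \prod_{i=1}^p x_i[i]$ with $x_i \in \sym{p^m}$; such an $x$ preserves each of the $p$ blocks of size $p^m$ partitioning $[1,p^{m+1}]$ setwise, and hence so does $(\Delta_{p^m}\pi)^x$. On the other hand, writing $y = hz_{m,t}b$ from the definition of $Y_m$, with $h \in H_{p^m}$, $t \in \mathbb{Z}_p^*$ and $b \in R_m^0 \subseteq B_m$, the factor $h$ centralises $\Delta_{p^m}\pi$, so Lemma \ref{L:z}(1) yields $(\Delta_{p^m}\pi)^y = ((\pi^{[p^m]})^t)^b$. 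Now $(\pi^{[p^m]})^t$ nontrivially permutes the $p$ blocks of size $p^m$ (as $t \in \mathbb{Z}_p^*$), and conjugation by $b$, which preserves those blocks, does not alter this. This contradicts the block-preservation of $(\Delta_{p^m}\pi)^x$.

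Having reduced to $x \in Y_m$, write $x = hz_{m,t}b$ and $y = h'z_{m,t'}b'$. The equality $((\pi^{[p^m]})^t)^b = ((\pi^{[p^m]})^{t'})^{b'}$ together with the uniqueness clause of Corollary \ref{C:conj} forces $t = t'$ and $b = b'$. The relation $y = \sigma^{[p]} x$ then collapses to $h' = \sigma^{[p]} h$ in $\sym{p^{m+1}}$. To finish, observe that each generator $\pi[a]$ of $H_{p^m}$ is supported on the block $((a-1)p,ap]$ of size $p$ and fixes all other points, so both $h$ and $h'$ preserve each such block setwise; but $\sigma^{[p]}h$ sends the block $((a-1)p,ap]$ to $((a\sigma-1)p,a\sigma p]$. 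Comparing the two sides forces $a\sigma = a$ for every $a$, so $\sigma = 1$, and then $h = h'$, whence $x = y$.

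The main obstacle is the first reduction: correctly aligning the disjoint decomposition $X_{p^m} = Y_m \sqcup \prod_i X_{p^{m-1}}[i]$ with whether $(\Delta_{p^m}\pi)^x$ respects the partition of $[1,p^{m+1}]$ into blocks of size $p^m$. Once that dichotomy is isolated, the remainder is a direct application of Corollary \ref{C:conj} combined with the semidirect product structure of the centraliser $H_{p^m} \rtimes \sym{p^m}^{[p]}$ of $\Delta_{p^m}\pi$ in $\sym{p^{m+1}}$.
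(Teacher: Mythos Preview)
Your proof is correct and follows essentially the same approach as the paper: use that $\sigma^{[p]}$ centralises $\Delta_{p^m}\pi$ to see $(\Delta_{p^m}\pi)^x=(\Delta_{p^m}\pi)^y$, rule out $x\in\prod_i X_{p^{m-1}}[i]$ via the block decomposition, and then invoke uniqueness. The only difference is that where you unpack the final uniqueness argument by hand (Corollary~\ref{C:conj} for $(t,b)$, then a block argument for $h$), the paper simply cites the uniqueness clause of Proposition~\ref{P:prelim3}, which already packages exactly that.
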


\begin{proof}
We have $y = \tau^{[p]} x$ for some $\tau \in \sym{p^m}$.  Since $y \in Y_m$, we have $(\Delta_{p^m} \pi)^y \in R_m \setminus B_m$, and since
$\tau^{[p]}$ centralises $\Delta_{p^m}\pi$ we have
$$(\Delta_{p^m} \pi)^x  = (\Delta_{p^m} \pi)^y \in R_m \setminus B_m.$$
We claim that $x\in Y_m$. If not, then
$x \in \prod_{i=1} X_{p^{m-1}}[i] \subseteq \prod_{i=1}^p \sym{p^{m}}[i]$, say $x = \prod_{i=1}^p x_i[i]$, and then $(\Delta_{p^m} \pi)^x = \prod_{i=1}^p (\Delta_{p^{m-1}} \pi)^{x_i}[i] \in \prod_{i=1}^p \sym{p^{m}}[i]$,
a contradiction since $(R_m \setminus B_m) \cap \prod_{i=1}^p \sym{p^{m}}[i] =\emptyset$.  Thus $x \in Y_m$ and hence $x = y$ by the uniqueness result
of Proposition \ref{P:prelim3}.
\end{proof}

\begin{thm} \label{T:unique}
Let $k \in \mathbb{Z}^+$ with $p \nmid k$.  Then $X_{k-1}$ is a transversal of the right cosets $Dx$ satisfying  $(\Delta_k \sym{p})^x \cap P \ne 1$.
\end{thm}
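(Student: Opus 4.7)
The plan is to dispatch existence and uniqueness separately, since essentially all technical work has already been done in the preceding propositions.

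For existence, suppose $(\Delta_k\sym{p})^x \cap P \ne 1$. Theorem \ref{T:exist} produces some $x_0 \in X_k$ with $Dx_0 = Dx$, and Proposition \ref{P:reduction0} then supplies $y \in X_{k-1}$ with $Dy = Dx_0 = Dx$.

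For uniqueness, I would take $x, y \in X_{k-1}$ with $Dx = Dy$ and show $x = y$. Since $p \nmid k$ forces $\kappa_l = 0$, Lemma \ref{L:X}(2) factors $X_k = X_{k-1} \times X_1[k]$, so I can view $x, y$ as $\prod_\kappa x_i$ and $\prod_\kappa y_i$ in $X_k$ with $x_l = y_l = 1$ and $x_i, y_i \in X_{p^{\kappa_i}}$ for $i < l$. Proposition \ref{P:reduction} converts $Dx = Dy$ into the system of coset equalities $\sym{p^{\kappa_i}}^{[p]} x_i = \sym{p^{\kappa_i}}^{[p]} y_i$ for $i < l$, so the problem reduces to showing that each such block-level equality forces $x_i = y_i$.

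The inductive core of the argument is: for $u, v \in X_{p^m}$, the equality $\sym{p^m}^{[p]} u = \sym{p^m}^{[p]} v$ implies $u = v$. The base $m=0$ is immediate, since $X_1 = Y_0$ and Proposition \ref{L:3} applies. For the inductive step I use the disjoint decomposition $X_{p^m} = Y_m \sqcup \prod_{i=1}^p X_{p^{m-1}}[i]$ from Lemma \ref{L:X}(1). If either $u$ or $v$ lies in $Y_m$, then Proposition \ref{L:3} concludes $u = v$ directly. Otherwise both lie in $\prod_{i=1}^p X_{p^{m-1}}[i]$, and after factoring $u = \prod u_i[i]$ and $v = \prod v_i[i]$, Proposition \ref{P:reduction2} reduces each coordinate to the case $X_{p^{m-1}}$, where the induction hypothesis gives $u_i = v_i$, hence $u = v$.

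There is no genuine obstacle in this final assembly; the real content sits in the earlier propositions. The one point worth flagging is the mixed case of the induction, where only one of $u, v$ lies in $Y_m$: Proposition \ref{L:3} is stated asymmetrically precisely so this case is absorbed without extra argument, since its proof in fact forces the other element into $Y_m$ as well.
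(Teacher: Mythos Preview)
Your proposal is correct and follows exactly the route the paper takes: existence via Theorem \ref{T:exist} and Proposition \ref{P:reduction0}, uniqueness by reducing with Proposition \ref{P:reduction} to the single-factor case and then inducting on $m$ using Lemma \ref{L:X}(1), Proposition \ref{P:reduction2}, and Proposition \ref{L:3}. The paper's own proof is the one-line citation of Propositions \ref{P:reduction0}, \ref{P:reduction}, \ref{P:reduction2} and \ref{L:3}; you have simply made the implicit induction explicit, and your remark about the asymmetric formulation of Proposition \ref{L:3} absorbing the mixed case is exactly why that proposition is stated as it is.
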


\begin{proof}
This follows from Propositions \ref{P:reduction0}, \ref{P:reduction}, \ref{P:reduction2} and \ref{L:3}.
\end{proof}

\begin{cor} \hfill \label{C:unique}
\begin{enumerate}
\item Let $m \in \mathbb{Z}_{\geq 0}$, and let $a_m = |X_{p^m}|$.  Then $a_m$ satisfies the following recurrence relation (where $a_{-1} = 0$):
$$
a_{m} = a_{m-1}^p + p^{2p^m-1}(p-1).
$$
\item Let $k \in \mathbb{Z}^+$ such that $p \nmid k$, with $p$-adic $p$-composition $(\kappa_1,\dotsc, \kappa_l)$.  Then $|X_{k-1}| = \prod_{i=1}^{l-1} a_{\kappa_i}$.  In particular,
    $$\dim ((\Res_P \Lie(kp))_{pf}) = (p-1)(k-1)! \prod_{i=1}^{l-1} a_{\kappa_i}.$$
\end{enumerate}
\end{cor}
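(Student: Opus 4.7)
My plan is to establish both parts of the corollary by combining the structural decomposition of $X_{p^m}$ from Lemma \ref{L:X} with a direct enumeration of $Y_m$, and then invoking Corollary \ref{C:main} and Theorem \ref{T:unique} to get the dimension formula.

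For part (1), I would begin with the disjoint union $X_{p^m} = Y_m \cup \prod_{i=1}^{p} X_{p^{m-1}}[i]$ from Lemma \ref{L:X}(1). Since the $p$ factors in the second piece act on pairwise disjoint supports, its cardinality is simply $a_{m-1}^{p}$, so the task reduces to computing $|Y_m|$. By the uniqueness clause of Proposition \ref{P:prelim3}, the map $(h,t,b) \mapsto h z_{m,t} b$ is a bijection from $H_{p^m} \times \mathbb{Z}_p^{*} \times R_m^{0}$ onto $Y_m$, so $|Y_m| = |H_{p^m}| \cdot (p-1) \cdot |R_m^{0}|$. Now $|H_{p^m}| = p^{p^m}$ since $H_{p^m} = \prod_{a=1}^{p^m} \langle \pi[a] \rangle$ is a product of $p^m$ cyclic groups of order $p$ on disjoint supports. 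For $R_m^{0} = \prod_{i=1}^{p-1} R_{m-1}[i]$ I would use the wreath-product recursion $|R_m| = p \cdot |R_{m-1}|^{p}$ with $|R_0| = p$, which gives $|R_{m-1}| = p^{(p^m - 1)/(p-1)}$ and hence $|R_m^{0}| = p^{p^m - 1}$. Multiplying yields $|Y_m| = (p-1) p^{2p^m - 1}$, and adding $a_{m-1}^{p}$ produces the claimed recurrence. A quick sanity check at $m=0$ with $a_{-1}=0$ gives $a_0 = (p-1)p$, which agrees with $|Y_0| = |H_1| \cdot (p-1) \cdot |R_0^{0}| = p(p-1)$ since $R_0^{0}$ is trivial.

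For part (2), the hypothesis $p \nmid k$ forces $\kappa_l = 0$, and deleting this trailing zero from $\kappa$ yields the $p$-adic $p$-composition $(\kappa_1, \ldots, \kappa_{l-1})$ of $k-1$. Lemma \ref{L:X}(2) then gives $X_{k-1} = \prod_{i=1}^{l-1} X_{p^{\kappa_i}}$ (with disjoint supports), so $|X_{k-1}| = \prod_{i=1}^{l-1} a_{\kappa_i}$. The dimension formula follows immediately by combining Corollary \ref{C:main} with Theorem \ref{T:unique}, the latter identifying the integer $N$ of Corollary \ref{C:main} with $|X_{k-1}|$. I do not anticipate a substantial obstacle here: the delicate conceptual work (the uniqueness statement of Proposition \ref{P:prelim3} and the transversal property of Theorem \ref{T:unique}) is already in place, and the only genuine computation is the order of $R_m$, which is a routine consequence of its iterated wreath-product structure.
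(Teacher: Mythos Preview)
Your proposal is correct and follows essentially the same route as the paper: the paper likewise computes $|Y_m|$ via the uniqueness in Proposition~\ref{P:prelim3} (writing $|Y_m| = |H_{p^m}|\,|\mathbb{Z}_p^*|\,|R_{m-1}|^{p-1}$ and noting $|R_{m-1}| = p^{(p^m-1)/(p-1)}$), and then invokes Lemma~\ref{L:X}, Theorem~\ref{T:unique}, and Corollary~\ref{C:main}. Your write-up simply spells out the intermediate counts and the $m=0$ check in more detail.
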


\begin{proof}
From the uniqueness of Proposition \ref{P:prelim3}, we have $$|Y_m| = |H_{p^m}||\mathbb{Z}_p^*||R_{m-1}|^{p-1} = p^{2p^m-1}(p-1)$$ (note that $|R_{m-1}| = p^{\frac{p^m-1}{p-1}}$).  The Corollary follows now
from Lemma \ref{L:X}, Theorem \ref{T:unique} and Corollary \ref{C:main}.
\end{proof}

\begin{thm} \label{T:expgrowth}
Let $k \in \mathbb{Z}^+$ with $p \nmid k$.  Then $e^{C_1k} \leq |X_{k-1}| \leq e^{C_2k}$ is for some constants $C_1 < C_2$.

In particular, the dimension of $(\Res_P\Lie(kp))_{pf}$ grows exponentially with $k$, but $\dim((\Res_P\Lie(kp))_{pf})/ \dim(\Lie(kp)) \to 0$ as $k \to \infty$.
\end{thm}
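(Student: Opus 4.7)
The approach is to reduce to a uniform estimate $\log a_m = \Theta(p^m)$ for the sequence $a_m$ of Corollary \ref{C:unique}(1). Since $\kappa_l = 0$ whenever $p \nmid k$, Corollary \ref{C:unique}(2) gives $\log|X_{k-1}| = \sum_{i=1}^{l-1} \log a_{\kappa_i}$ while $\sum_{i=1}^{l-1} p^{\kappa_i} = k-1$, so linear bounds on $\log a_m$ in $p^m$ translate directly into linear bounds on $\log|X_{k-1}|$ in $k-1$.

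For the lower estimate, drop $a_{m-1}^p$ in the recurrence to get $a_m \geq (p-1)p^{2p^m-1}$, so $\log a_m \geq (2p^m-1)\log p + \log(p-1) \geq p^m \log p$ for all $m \geq 0$. For the upper estimate, rewrite
$$\log a_m = p \log a_{m-1} + \log\Bigl(1 + \frac{(p-1)p^{2p^m-1}}{a_{m-1}^{\,p}}\Bigr) \leq p \log a_{m-1} + \frac{(p-1)p^{2p^m-1}}{a_{m-1}^{\,p}}$$
using $\log(1+t)\leq t$. Plugging the lower bound $a_{m-1}^p \geq (p-1)^p p^{2p^m-p}$ into the correction term bounds it by the $m$-independent constant $K := p^{p-1}/(p-1)^{p-1}$. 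Iterating yields $\log a_m \leq p^m \log a_0 + K(p^m-1)/(p-1) \leq c_2 p^m$ with $c_2 = \log a_0 + K/(p-1)$. Combining produces $e^{C_1 k} \leq |X_{k-1}| \leq e^{C_2 k}$ for some $0 < C_1 < C_2$ (after possibly decreasing $C_1$ to accommodate small $k$).

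Exponential growth of $\dim((\Res_P \Lie(kp))_{pf}) = (p-1)(k-1)!\,|X_{k-1}|$ is then immediate. For the ratio with $\dim\Lie(kp) = (kp-1)!$, use
$$\frac{(kp-1)!}{(k-1)!} = \prod_{j=k}^{kp-1} j \geq k^{(p-1)k},$$
so
$$\frac{\dim((\Res_P \Lie(kp))_{pf})}{\dim \Lie(kp)} \leq (p-1)\, e^{C_2 k}\, k^{-(p-1)k} = (p-1)\exp\bigl(C_2 k - (p-1) k \log k\bigr) \to 0$$
as $k \to \infty$. The only delicate point is obtaining a uniform constant $K$ in the recurrence iteration; this is clean because the ratio between the numerator $(p-1)p^{2p^m-1}$ and the lower bound on the denominator $a_{m-1}^p$ is an $m$-independent $p$-power, so the correction term never accumulates.
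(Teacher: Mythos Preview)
Your proof is correct and shares the paper's overall strategy---both reduce the exponential bounds on $|X_{k-1}|$ to the estimate $\log a_m = \Theta(p^m)$---but the reduction is carried out differently. The paper brackets the product by single terms, claiming $a_{\kappa_1} \leq \prod_{i=1}^{l-1} a_{\kappa_i} \leq a_{\kappa_1+1}$ (using $p$-regularity of $\kappa$ together with $a_m \geq a_{m-1}^p$) and then invokes $p^{\kappa_1} \leq k < p^{\kappa_1+1}$; you instead take logarithms and use the exact identity $\sum_{i=1}^{l-1} p^{\kappa_i} = k-1$ to sum the per-term bounds directly. Your route is arguably cleaner, since it bypasses the bracketing inequality and yields explicit constants. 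Your iteration for the upper bound on $\log a_m$, bounding the correction $\log\bigl(1+(p-1)p^{2p^m-1}/a_{m-1}^p\bigr)$ by the $m$-independent constant $K=p^{p-1}/(p-1)^{p-1}$, is a precise justification of what the paper asserts only informally (``$a_m$ is of order $p^{2p^m}$''). One small caveat: for $k=1$ the product is empty and $|X_0|=1$, so no strictly positive $C_1$ works there; your parenthetical about decreasing $C_1$ is needed, and the theorem statement itself does not require $C_1>0$.
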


\begin{proof}
Let $\kappa= (\kappa_1,\dotsc,\kappa_l)$ be the $p$-adic $p$-composition of $k$.  Then $\kappa$ is a $p$-regular partition (i.e.\ it is weakly decreasing and does not have $p$ or more equal parts), and $|X_{k-1}| = \prod_{i=1}^{l-1} a_{\kappa_i}$ by Corollary \ref{C:unique}(2).  It is not difficult to show by induction that $a_{\kappa_1} \leq \prod_{i=1}^{l-1} a_{\kappa_i} \leq a_{\kappa_1 + 1}$, and that $a_m$ is of order $p^{2p^m}$.  Thus $e^{C_1k} \leq |X_{k-1}| \leq e^{C_2k}$ for some constants $C_1 < C_2$.

Since $\dim ((\Res_P \Lie(kp))_{pf}) = (p-1)(k-1)! |X_{k-1}|$, and $m!$ is of order $e^{m\log m}$, we see that $\dim (\Res_P\Lie(kp))_{pf}$ grows exponentially with $k$, and $\dim(\Res_P\Lie(kp))_{pf}/ \dim(\Lie(kp)) \to 0$ as $k \to \infty$.
\end{proof}

Theorem \ref{T:expgrowth} thus lends some support to the belief that $\Liemax(kp)$ is relatively large compared with $\Lie(kp)$, as mentioned in our introduction, even though $\Lie(kp)_{pf}$ grows exponentially with $k$.


\begin{rem}
Selick and Wu \cite{SW2} computed explicitly $\Lie^{\max}(6)$ and $\Lie^{\max}(8)$ in characteristic two.  In particular, they showed that $\Lie^{\max}(6)$ has dimension $96$, which is also the upper bound computed by our recurrence formula.
\end{rem}


\begin{thebibliography}{9}


\bibitem[AK]{AK} G. Arone, M. Kankaanrinta, `The homology of certain subgroups of the symmetric group
with coefficients in $\Lie(n)$', {\em  J. Pure Appl. Algebra} {\bf 127} (1--14), 1998.

\bibitem[AM]{AM} G. Arone, M. Mahowald, `The Goodwillie tower of the identity
functor and the unstable periodic homotopy of spheres', {\em Invent. Math.} {\bf 135}
(743--788), 1999.

\bibitem[B]{B}
D.J. Benson, `Representations and cohomology I: Basic representation theory
of finite groups and associative algebras', {\em
Cambridge Studies in Advanced Mathematics} \textbf{30}, CUP, 1991.



\bibitem[C]{C} F. Cohen, `The homology of $\mathcal{C}_{n+1}$ spaces, $n\geq 0$', in: Homology of Iterated Loop Spaces, in: Lecture Notes in
Mathematics {\bf 533}, Springer, 1976.

\bibitem[ES]{ES}
K. Erdmann, M. Schocker, `Modular Lie powers and the Solomon descent algebra', {\em Math. Z.} \textbf{253} (295--313), 2006.

\bibitem[SW1]{SW}
P. Selick, J. Wu, `Natural coalgebra decomposition of tensor algebras and loop suspensions', {\em Mem. Amer. Math. Soc.} \textbf{148}, 2000.

\bibitem[SW2]{SW2}
P. Selick, J. Wu, `Some calculations for $\Lie(n)^{\max}$ for low $n$', {\em J. Pure Appl. Algebra} \textbf{212} (2570--2580), 2008.


\bibitem[We]{Weber} F. Weber, `\"{U}ber invarianten in Lieschen Ringen', {\em Math. Ann.} {\bf 120} (563--580), 1949.

\bibitem[Wi]{Witt} E. Witt, `\"{U}ber freie Liesche Ringe und ihre
Unterringe', {\em Math. Z.} {\bf 56} (195--216), 1956.
\end{thebibliography}
\end{document}